\newtheorem{theorem}{Theorem}
\newtheorem{remark}[theorem]{Remark}
\newtheorem{lemma}[theorem]{Lemma}
\newtheorem{proposition}[theorem]{Proposition}
\newtheorem{corollary}[theorem]{Corollary}
\newtheorem{definition}[theorem]{Definition}
\newtheorem{problem}[theorem]{Problem}
\newcommand{\wto}{ \ \stackrel{w} {\longrightarrow} \ }
\numberwithin{theorem}{section}
\numberwithin{equation}{section}
\title[A new class evolution multivalued quasi-variational inequalities]{A new class of evolution multivalued quasi-variational inequalities I: existence and nonsmooth optimal control}
\author[S.\,Zeng]{Shengda Zeng}
\address[S.\,Zeng]{Guangxi Colleges and Universities Key Laboratory of Complex System Optimization and Big Data Processing, Yulin Normal University, Yulin 537000, Guangxi, P.R. China \& Jagiellonian University in Krakow,
Faculty of Mathematics and Computer Science, ul. Lojasiewicza 6, 30-348 Krakow, Poland}
\email{zengshengda@163.com}
\author[V.D.\,R\u adulescu]{Vicen\c{t}iu D. R\u{a}dulescu$^{*}$}
\address[V.D.\,R\u adulescu]{Faculty of Applied Mathematics, AGH University of Krak\'ow, 30-059 Krak\'ow, Poland \& Department of Mathematics, University of Craiova, 200585 Craiova, Romania \&	Simion Stoilow Institute of Mathematics of the Romanian Academy, 010702 Bucharest, Romania}
\email{radulescu@inf.ucv.ro}
\thanks{$^{*}$Corresponding author}
\subjclass{49J27,49J53, 58E35, 49J52, 90C31}
\keywords{Evolution quasi-variational inequalities, nonlinear and nonsmooth optimal control, existence, compactness, fixed point, Clarke subdifferential.}
\begin{document}

\begin{abstract}
In this paper, we consider a new kind of evolution multivalued quasi-variational inequalities with feedback effect and a nonlinear bifunction which contain several (evolution) quasi-variational/hemivariational inequalities as special cases. The main contribution of this paper is twofold. The first goal is to establish a novel framework for proving the existence of solutions and the compactness of solution set to the evolution multivalued quasi-variational inequalities, under quite mild assumptions. Whereas, the second contribution is to introduce and study a nonlinear and nonsmooth optimal control problem governed by an evolution multivalued quasi-variational inequality, and then to obtain the sufficient conditions for guaranteeing the solvability of the nonlinear and nonsmooth optimal control problem under consideration. Such  nonlinear and nonsmooth optimal control problem could as a useful model to 
 explore the simultaneous distributed-boundary optimal control problems driven by evolution multivalued quasi-variational inequalities, and optimal parameters identification for evolution multivalued quasi-variational inequalities.
\end{abstract}
	
\maketitle

\section{Introduction}

After the pioneering work of Stampacchia~\cite{Stampacchia}, Hartman-Stampacchia~\cite{Hartman-Stampacchia}, Lions-Stampacchia~\cite{Lions-Stampaccia}, and Br\'ezis~\cite{BrezisJMPA} (who firstly used the properties of maximal monotone operators to handle with a linear evolutionary variational inequality), the reserach of variational inequalities has attracted plenty of attention, because variational inequality can be a useful and powerful mathematical models and tools to study various complicated engineering problems and physics processes, and to depict abnormal natural phenomena and mechanism of economic decision.  
Typically speaking, variational inequalities could be classified into two types: 
\begin{itemize}
	\item[$\bullet$] stationary variational inequalities which contain elliptic variational inequalities and time-dependent variational inequalities without time derivative operators;
	\item[$\bullet$] evolution variational inequalities which include parabolic variational inequalities, hyperbolic varriational inequalities and variational inequalities with time fractional-order  derivative operators. 
\end{itemize}
 With the development of theory and applications of variational analysis, nonsmooth analysis and partial differential equations, various variational inequalities and its extends are brought to our attention, see for example,  Hinterm\"uller el al.~\cite{Hintermuller-Rautenberg-SIOPT-2013,Hintermuller2,Hintermuller3} (optimal control and optimal shape design for elliptic variational inequalities), Kikuchi-Oden~\cite{Kikuchi-Oden} (finite element methods for variational inequalities), Han et al.~\cite{Han1,Han2,Han3} (variational inequalities arising in viscoelastic contact  problems), Pang et al.~\cite{Pang1,Pang2,Pang3} (differential variational inequalities, and parameteric variational inequalities), Chen et al.~\cite{Chen1,Chen2,Chen3} (stochastic variational inequalities) and so on.

It is well-known that the constraint sets of variational inequalities or generalized variational inequalites (for instance hemivariational inequalities) are independent of the unknown solution. Whereas, there are many comprehensive and crucial real problems arising in physical and economic models as well as the industrial production (for example, differential Games with incomplete information and shared resource (see Chan-Pang~\cite{Chan-Pang1982} and Han-Pang~\cite{Han-PangMP2010}), $(s,S)$ policy with compound Poisson and diffusion demands (see Bensoussan et al.~\cite{Bensoussan1,Bensoussan2}), and Bean-Kim superconductivity model with temperature and magnetic field  (see Yousept~\cite{YouseptESAIM})), which are modeled eventually for the variational inequalities in which the constraint sets are required to rely explicitly on the unknown solution. This motivates us to introduce the notion of quasi-variational inequalities which can be seen a class of generalized variational inequalities such that the constraints sets depended on the unknown solutions. Because of the dependence of unknown solutions for the constraints, this leads to the difficulty that the classical surjectivity theorems, the method of super-solutions and sub-solutions, and   variational analysis are in-available (or can not be applied directly) for quasi-variational inequalities. Therefore, some novel, effective and useful methods and techniques are being proposed and introduced. We mention some recent representative researches:
Facchinei-Kanzow-Sagratella~\cite{Facchinei-Kanzow-Sagratella} applied a globally convergent algorithm based on a potential reduction technique and  Karush-Kuhn-Tucker conditions to study a quasi-variational inequalities in finite dimensional spaces;  Alphonse-Hinterm\"uller-Rautenberg~\cite{Alphonse-Hintermuller-Rautenberg1,Alphonse-Hintermuller-Rautenberg2,Alphonse-Hintermuller-Rautenberg3} systemically gave the directional differentiability of the solution map of  elliptic and parabolic quasi-variational inequalities of obstacle type type, and proved the stability of solution sets for the  quasi-variational inequalities of obstacle type; in an infinite dimensional Banach space setting,  Kanzow-Steck~\cite{Kanzow1,Kanzow2}  established a theoretical framework for the analysis of existence of solutions to a class of quasi-variational inequalities, 
and then they employed the theoretical results to construct a useful and impressive approximating algorithm to the quasi-variational inequalities in which the main method is based on the augmented Lagrangian and exact penalty approaches as well as the theory of pesuedomonotone operators; Chen-Wang~\cite{Chen-Wang-MP} considered a complicated dynamics system, which is composed of a ordinary differential equation and a time-dependent quasi-variational inequality, arising in the differential Nash equilibrium problems with shared constraints, and proposed a regularized smoothing method to find a solution of such dynamics system; by using the semigroup theory for Maxwell's equations, Yosida regularization,  the subdifferential calculus and semidiscrete Ritz-Galerkin approximation technique, Yousept~\cite{YouseptESAIM,Yousept2,Yousept3} used the idea of quasi-variational inequalities to study various evolution Maxwell equations with Bean's constitutive law between the electric field and the current density. For more details on this direction, the reader is referred to 
Cubiotti-Yao~\cite{Cubiotti-Yao}, Zeng-Khan-Mig\'orski~\cite{Zeng-Khan-MigorskiSCM,Zeng-Migorski-Khan-SICON}, Aussel et al.~\cite{Aussel1,Aussel2}, Adly el al.~\cite{Tahar1,Tahar2}, Kubo-Yamazaki~\cite{Kubo-Yamazaki} and the references
therein.

Although there are a large number of publications concerning the theory, numerical analysis and applications of quasi-variational inequalities. Few have been dedicated to the quasi-variational inequalities with noncovex and nonmonotone framework or evolution quasi-variational inequalities with feedback effect, see~\cite{Barrett-Prigozhin,Cen-Khan-Motreanu-Zeng,Daniele-EJOR,Miranda-Rodrigues-Santos,Nguyen-Qin-SVAA2020,Pang-Fukushima2005,Rodrigues-Santos-AMO}. Recently, Khan-Mig\'orski-Zeng~\cite{Khan-Migorski-Zeng-Optimization2024} introduced a sort of evolutionary quasi-hemivariational inequalities which contain convex and nonconvex potentials, and proved the existence of a solution under strict growth conditions and compact assumptions. This limits the scope of the applications to  evolutionary quasi-variational inequalities. The first novelty of this paper is to remove these flaws and to develop a new and generalized theoretical framework for proving the sovability of a kind of complicated evolution quasi-variational inequalities involving a multivalued operator (which is nonmonotone and can be seen as a multivalued feedback term) and a nonlinear bifunction as follows: 
\begin{problem}\label{problems1}
	Given an element $\mathscr E\in X^*$, find $x\in \mathscr M(x)\cap D(\mathscr L)$ and $\xi\in \mathscr G(\gamma x)$ such that 
	\begin{eqnarray}\label{eqns3.1}
		\langle\mathscr L(x)+\mathscr F(x)-\mathscr E,y-x\rangle_X+\langle\xi,\gamma(y-x)\rangle_Y\ge \Psi(x,x)-\Psi(x,y) 
	\end{eqnarray} 	
	for all $y\in \mathscr M(x)\cap D(\mathscr L)$.
\end{problem}
\noindent Here, $(X,\|\cdot\|_X)$ and $(Y,\|\cdot\|_Y)$ are  reflexive and separable Banach spaces with its dual spaces $(X^*,\|\cdot\|_{X^*})$ and $(Y^*,\|\cdot\|_{Y^*})$, $\mathscr L\colon D(\mathscr L)\subset X\to X^*$ is a linear maximal monotone operator, $\mathscr F\colon X\to X^*$ is a  fully nonlinear operator, $\Psi\colon X\times X\to \mathbb R$ is a nonlinear bifunction, and $\mathscr M\colon X\to 2^X$ and $\mathscr G\colon Y\to 2^{Y^*}$ are two given multivalued mappings.  Whereas, the second goal of this paper is to introduce and study a nonlinear and nonsmooth optimal control problem driven by evolution multivalued quasi-variational inequality, Problem~\ref{problems1}, in which   $\mathscr E\in X^*$ is a control variable, and the fully nonlinear operator $\mathscr F$ and multivalued mapping $\mathscr G$ are described  (or are determinated) by control variables (or parameters) $e\in \Pi$ and $l\in \Theta$, namely, $\mathscr F\colon \Pi\times X\to X^*$ and $\mathscr G\colon \Theta\times Y\to 2^{Y^*}$. Because $(e,l,\mathscr E)\in \Pi\times \Theta\times X^*$ can be seen as control variables or unknown parameters (which could be discontinuous), so, the nonlinear and nonsmooth optimal control problem under consideration can be applied to study simultaneous distributed-boundary optimal control
problems driven by evolution multivalued quasi-variational inequalities, optimal parameters identification for evolution multivalued quasi-variational inequalities, and so forth. This is the second motivation of the present paper. Moreover, it should be mentioned that the theoretical results established in this paper could be applied to explore various parabolic differential inclusions with nonlinear partial differential operators, semipermeability problems with mixed boundary conditions, and non-stationary Non-Newton fluid problems with multivalued and nonmonotone friction law, and so on (more details, one could refer our second paper~\cite{Zeng-Radulescu}).

We end this section by  recalling some necessary preliminary material including convex analysis, variational inequalities and nonsmooth analysis which will be used in next sections for establishing the main results of this paper. 

In the sequel, the symbol ``$\to$" (resp. ``$\wto$") is used to represent the strong (resp. weak) convergence  in various norm spaces. Given a real and reflexive Banach space $(V,\|\cdot\|_V)$, we say that function  $j\colon V\to \mathbb R$ is locally Lipschitz continuous at $x\in V$, if we can find a constant $c_x>0$ and a neighborhood $O(x)$ of $x$ satisfying
\begin{equation*}
	|j(y)-j(z)|\le c_x\|y-z\|_V \ \ \mbox{for all}\ \ y, z\in O(x).
\end{equation*}
\begin{definition}\label{SUB}
	Assume that $j\colon V\to \mathbb R$ is a locally Lipscthiz function on $D\subset V$, 
	the generalized (Clarke) directional derivative of $j$ at the point $x\in V$ in the direction $y\in V$, denoted by $j^0 (x; y)$, is given by 
	\begin{equation*}\label{defcalark}
		j^0(x;y) = \limsup
		\limits_{t\to 0^{+}, \, z\to x} \frac{j(z+t y)-j(z)}{t}.
	\end{equation*}
	The generalized subdifferential operator for $j$ at $x\in V$ is defined  by
	\begin{equation*}
		\partial_C j(x) = \{\, \eta\in V^{*} \,\mid\, j^0 (x; y)\ge
		\langle\eta, y\rangle_{V^*\times V} \ \ \mbox{\rm for all} \ \ y \in V \, \}.
	\end{equation*}
\end{definition}

In the monograph~\cite{Clarke}, we could find a plenty of  impressive and critical properties to the generalized subgradient and generalized directional derivative for  locally Lipschitz functions. Next, we deliver several useful results of generalized subgradient and generalized directional derivative, which will be applied to obtain the main results of this paper.  \begin{proposition}\label{subdiff}
	Given a locally Lipschitz function $j \colon V \to \mathbb R$, we have the following results:	\begin{itemize}
		\item[{\rm(i)}] for every $y \in V$, it is valid	$j^0(x; y) = \max \, \{ \, \langle \eta, y \rangle_{V^*\times V}
		\mid \eta \in \partial_C j(x) \, \}$.
		\item[{\rm(ii)}]  $V \times V \ni (x,y)\mapsto j^0(x;y)
		\in \mathbb R$ is upper semicontinuous.
		\item[{\rm(iii)}] $\partial_Cj\colon V\to 2^{V^*}$ is upper semicontinuous from the strong topology of $V$ to the weak$^*$ topology of $V^*$.  
	\end{itemize}
\end{proposition}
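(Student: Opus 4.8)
The plan is to treat the three assertions in the order (i), (ii), (iii), since the set-valued upper semicontinuity in (iii) will be deduced from the upper semicontinuity of the directional derivative in (ii).

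For (i), I would first record the two elementary algebraic facts that follow immediately from the $\limsup$ in Definition~\ref{SUB}: for fixed $x$ the map $y\mapsto j^0(x;y)$ is positively homogeneous (substitute $s=\lambda t$) and subadditive, hence sublinear, and it satisfies the local bound $|j^0(x;y)|\le c_x\|y\|_V$ coming from the Lipschitz estimate on the difference quotients. Thus $p:=j^0(x;\cdot)$ is a continuous sublinear functional on $V$, and the set in Definition~\ref{SUB},
\[
\partial_C j(x)=\{\eta\in V^*\mid \langle\eta,y\rangle_{V^*\times V}\le p(y)\ \text{for all}\ y\in V\},
\]
is exactly the subdifferential of this convex continuous function at the origin. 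By Hahn--Banach it is nonempty, convex and weak$^*$ closed, and since it is contained in the ball of radius $c_x$ it is weak$^*$ compact by Banach--Alaoglu. The support-function identity for sublinear functionals then gives $p(y)=\max\{\langle\eta,y\rangle_{V^*\times V}\mid\eta\in\partial_C j(x)\}$, with the maximum attained by weak$^*$ compactness; this is (i).

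For (ii), I would argue sequentially. Fix $(x,y)$, take $x_n\to x$ and $y_n\to y$ in $V$, and aim to show $\limsup_n j^0(x_n;y_n)\le j^0(x;y)$. From the $\limsup$ defining $j^0(x_n;y_n)$, choose for each $n$ a point $z_n$ with $\|z_n-x_n\|_V\le 1/n$ and a scalar $t_n\in(0,1/n)$ such that
\[
j^0(x_n;y_n)\le \frac{j(z_n+t_n y_n)-j(z_n)}{t_n}+\frac1n .
\]
Since $j$ is Lipschitz with some constant $c$ on a neighbourhood of $x$, for large $n$ the points $z_n+t_ny_n$ and $z_n+t_ny$ lie in that neighbourhood and
\[
\frac{j(z_n+t_n y_n)-j(z_n)}{t_n}\le \frac{j(z_n+t_n y)-j(z_n)}{t_n}+c\|y_n-y\|_V .
\]
Because $z_n\to x$ and $t_n\to 0^+$, the $\limsup$ of the first term on the right is at most $j^0(x;y)$, while the errors $c\|y_n-y\|_V$ and $1/n$ vanish; combining the two displays yields the claim.

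For (iii), I would use the standard characterization of strong-to-weak$^*$ upper semicontinuity for a locally bounded map with weak$^*$-compact values via its closed graph. Let $x_n\to x$ strongly and $\eta_n\in\partial_C j(x_n)$. By the local Lipschitz constant $c$ of $j$ near $x$ together with (i) one has $\|\eta_n\|_{V^*}\le c$ for $n$ large, so by reflexivity (where weak and weak$^*$ coincide) a subsequence satisfies $\eta_n\wsto\eta$ in $V^*$. For every $y\in V$ we have $\langle\eta_n,y\rangle_{V^*\times V}\le j^0(x_n;y)$; passing to the limit and invoking the upper semicontinuity established in (ii) gives $\langle\eta,y\rangle_{V^*\times V}\le\limsup_n j^0(x_n;y)\le j^0(x;y)$ for all $y$, i.e. $\eta\in\partial_C j(x)$. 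This closed-graph property together with the uniform bound yields the asserted upper semicontinuity. The main obstacle is step (ii): since the $\limsup$ defining $j^0$ ranges over both the base point $z$ and the step $t$, one must extract near-optimal auxiliary sequences $(z_n,t_n)$ and then use the Lipschitz estimate to transfer the difference quotient from the moving direction $y_n$ to the fixed direction $y$ without losing control of the base point; the remaining steps reduce to Hahn--Banach duality and a routine weak$^*$ compactness extraction.
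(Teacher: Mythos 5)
Your proof is correct, but note that the paper itself offers no proof of this proposition: it is recalled verbatim from Clarke's monograph \cite{Clarke} (Propositions 2.1.1, 2.1.2 and 2.1.5 there), and your argument is essentially the standard one given in that reference --- sublinearity plus Hahn--Banach and the support-function identity for (i), extraction of near-optimal pairs $(z_n,t_n)$ and the Lipschitz transfer from $y_n$ to $y$ for (ii), and closed graph plus local boundedness for (iii). The only point worth flagging is that your sequential extraction of a weak$^*$ convergent subsequence in (iii) uses reflexivity (or separability) of $V$; this is harmless here since the paper works throughout with reflexive separable spaces, but the general statement requires a net argument.
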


Moreover, we recall the following proposition which gives the sufficient conditions for proving that a variational inequality with a maximal monotone operator and a proper convex functional is solvable. 

\begin{proposition}\label{Proexistence}
	Let $X$ be a real reflexive Banach space with its dual space $X^*$. If the following conditions hold:
	\begin{itemize}
		\item[{\rm(i)}] $A\colon D(A)\to X^*$ is maximal monotone;
		\item[{\rm(ii)}] $\varphi\colon X\to \overline{\mathbb R}:=(-\infty,+\infty]$ is lower semicontinuous and convex with $\varphi\not\equiv+\infty$ (i.e., the effective domain of $\varphi$ is nonempty);
		\item[{\rm(iii)}] one of the condition is true  $\mbox{int}D(A)\cap D(\partial \varphi)$ or $D(A)\cap\mbox{int}D(\partial \varphi)$;
		\item[{\rm(iv)}] $A\colon D(A)\cap D(\varphi)\subset X\to 2^{X^*}$ is coercive in the following sense,  there exist $u_0\in D(A)\cap D(\partial \varphi)$ and $r>0$ such that 
		\begin{eqnarray*}
			\langle u^*,u-u_0\rangle_X>0\mbox{ for all $u^*\in A(u)+\partial \varphi(u)$ with $\|u\|>r$}. 
		\end{eqnarray*}
	\end{itemize}
	Then, the variational inequality has at least one solution 
	\begin{eqnarray*}
		\langle Au,v-u\rangle+\varphi(v)-\varphi(u)\ge 0\mbox{ for all $v\in X$}. 
	\end{eqnarray*}
\end{proposition}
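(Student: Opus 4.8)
The plan is to recognize that the variational inequality is equivalent to the inclusion $0 \in (A + \partial\varphi)(u)$, and then to solve this inclusion by combining a maximal monotonicity result for the sum operator with a coercivity-driven surjectivity argument. First I would reformulate the problem: by the very definition of the convex subdifferential, an element $\eta \in \partial\varphi(u)$ satisfies $\langle \eta, v - u\rangle \le \varphi(v) - \varphi(u)$ for every $v \in X$, so taking $\eta = -Au$ shows that $u$ solves the variational inequality precisely when $-Au \in \partial\varphi(u)$, i.e. when $0 \in Au + \partial\varphi(u)$. Any such $u$ automatically lies in $D(A) \cap D(\partial\varphi) \subset D(A) \cap D(\varphi)$, hence is an admissible solution, so it suffices to prove that $0 \in \range(A + \partial\varphi)$.

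Next I would assemble the monotonicity structure. Since $\varphi$ is proper, convex and lower semicontinuous by (ii), Rockafellar's theorem guarantees that $\partial\varphi$ is maximal monotone. Combining this with hypothesis (i) and the constraint qualification (iii), Rockafellar's sum theorem (valid in the present reflexive setting) yields that $T := A + \partial\varphi$ is itself maximal monotone. I expect this step \emph{---} deducing maximal monotonicity of the sum from the interiority condition (iii) \emph{---} to be the main obstacle, since, absent such a qualification, the sum of two maximal monotone operators need not be maximal and the whole reformulation would break down.

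Finally I would run a regularization together with an a-priori-bound argument to place $0$ in the range of $T$. After an equivalent renorming (possible because $X$ is reflexive) the duality map $J$ is single-valued, demicontinuous, strictly monotone and coercive; hence for each $\eps > 0$ the operator $T + \eps J$ is maximal monotone and coercive, and the Browder--Minty surjectivity theorem produces $u_\eps \in D(T)$ and $\xi_\eps \in T u_\eps$ with $\xi_\eps + \eps J u_\eps = 0$. Testing the relation $\xi_\eps = -\eps J u_\eps$ against $u_\eps - u_0$ and using the lower bound $\langle J u_\eps, u_\eps - u_0\rangle \ge \|u_\eps\|_X(\|u_\eps\|_X - \|u_0\|_X)$, the coercivity hypothesis (iv) forces $\|u_\eps\|_X \le \max\{r, \|u_0\|_X\}$ uniformly in $\eps$: otherwise $\langle \xi_\eps, u_\eps - u_0\rangle$ would be simultaneously positive (by (iv)) and nonpositive (by the computation). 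This uniform bound gives $\xi_\eps = -\eps J u_\eps \to 0$ strongly in $X^*$, while along a subsequence $u_\eps \weak u$ in $X$. Since $\langle \xi_\eps, u_\eps\rangle = -\eps\|u_\eps\|_X^2 \to 0 = \langle 0, u\rangle$, the condition $\limsup_{\eps \to 0} \langle \xi_\eps, u_\eps\rangle \le \langle 0, u\rangle$ holds, and the demiclosedness of the graph of the maximal monotone operator $T$ (with respect to weak-strong convergence) yields $0 \in Tu$. Unwinding the reformulation, $u$ is the desired solution of the variational inequality.
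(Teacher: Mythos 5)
Your argument is correct. Note that the paper itself offers no proof of this proposition --- it is recalled as a known preliminary result (of the type found in Barbu's monograph on maximal monotone operators) and is simply invoked later in the proof of Lemma 2.3. Your route is the standard one for this classical statement: the equivalence of the variational inequality with the inclusion $0\in Au+\partial\varphi(u)$, Rockafellar's theorems on the maximal monotonicity of $\partial\varphi$ and of the sum $A+\partial\varphi$ under the interiority qualification (iii), and then the regularization $0=\xi_\eps+\eps Ju_\eps$ with the a priori bound $\|u_\eps\|_X\le\max\{r,\|u_0\|_X\}$ forced by (iv), followed by weak--strong demiclosedness of the graph of the maximal monotone sum. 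All steps are sound; in particular the regularization is genuinely needed here, since the weak coercivity in (iv) only yields $0\in\range(A+\partial\varphi)$ rather than full surjectivity.
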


Finally, we recall the well-known Kluge's fixed point theorem for multivalued functions which will be applied to establish the existence of solutions for the evolution multivalued quasi-variational inequality under consideration. 
\begin{theorem}\label{KlugeFPT}
	Suppose $\mathscr D$ be a nonempty, bounded and closed subset of real reflexive Banach space $V$, and $\mathcal Q\colon \mathscr D\to 2^{\mathscr D}$ is a multivalued function that satisfies the following conditions: 
	\begin{itemize}
		\item[{\rm(i)}] for every $u\in \mathscr D$, the set $\mathcal Q(u)$ is nonempty, closed and convex;
		\item[{\rm(ii)}] the graph Gr($\mathcal Q$) of $\mathcal Q$ is weakly closed.
	\end{itemize}
	Then,  the fixed point set of $\mathcal Q$ is nonempty.
\end{theorem}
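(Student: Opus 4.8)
The plan is to recognize Theorem~\ref{KlugeFPT} as the weak-topology incarnation of the Kakutani--Fan--Glicksberg fixed point theorem, and to verify its hypotheses by transporting the data of the problem into the locally convex space $(V,w)$, where $w$ denotes the weak topology. Throughout I would treat $\mathscr D$ as convex, as is standard for a fixed point theorem of this type and as the conclusion requires (without convexity a rotation-type map already defeats the statement). The two structural facts I would lean on are: reflexivity of $V$, which by the Kakutani/Eberlein--\v Smulian theorem makes every bounded, closed and convex subset weakly compact; and Mazur's theorem, which guarantees that a closed convex set coincides with its weak closure and is therefore weakly closed.

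First I would establish the ambient compactness. Since $V$ is reflexive and $\mathscr D$ is bounded, closed and convex, $\mathscr D$ is weakly compact; endowed with the relative weak topology it is a nonempty compact convex subset of the Hausdorff locally convex space $(V,w)$ (the weak topology is Hausdorff because $V^*$ separates points). Next I would check the value condition: by (i) each set $\mathcal Q(u)$ is nonempty, closed and convex, hence weakly closed by Mazur's theorem, and being a weakly closed subset of the weakly compact set $\mathscr D$ it is itself weakly compact. Thus $\mathcal Q$ has nonempty, convex, weakly compact values.

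The crux is upgrading the weakly closed graph hypothesis (ii) to weak upper semicontinuity of $\mathcal Q$. Here I would invoke the elementary but essential topological fact that a multifunction with closed graph whose range lies in a compact Hausdorff space is automatically upper semicontinuous with closed values; the role of the compact codomain is played precisely by $(\mathscr D,w)$, so the weak compactness obtained above is exactly what powers this implication. When $V$ is in addition separable, $V^*$ is separable and the weak topology on the bounded set $\mathscr D$ is metrizable, so (ii) may equivalently be read sequentially --- if $u_n\weak u$ and $\xi_n\weak \xi$ with $\xi_n\in\mathcal Q(u_n)$ then $\xi\in\mathcal Q(u)$ --- and the closed-graph-implies-usc step can be carried out with sequences. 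This is the step I expect to be the main obstacle, since a closed graph does not in general force upper semicontinuity: compactness of the target is indispensable and must be used carefully.

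Having arranged that $\mathcal Q\colon(\mathscr D,w)\to 2^{(\mathscr D,w)}$ is upper semicontinuous with nonempty, convex, weakly compact values on the nonempty compact convex set $\mathscr D$, I would conclude by applying the Kakutani--Fan--Glicksberg fixed point theorem in the locally convex space $(V,w)$: there exists $u^{*}\in\mathscr D$ with $u^{*}\in\mathcal Q(u^{*})$. Hence the fixed point set of $\mathcal Q$ is nonempty, as claimed.
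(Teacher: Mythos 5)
The paper does not prove this statement at all: it is recalled as ``the well-known Kluge's fixed point theorem'' and used as a black box, so there is no in-paper argument to compare against. Your derivation is a correct and standard way to establish it. The chain weak compactness of $\mathscr D$ (Kakutani/Eberlein--\v{S}mulian) $\Rightarrow$ values weakly compact and convex (Mazur) $\Rightarrow$ closed graph into a compact Hausdorff codomain implies upper semicontinuity $\Rightarrow$ Kakutani--Fan--Glicksberg in $(V,w)$ is exactly how Kluge's theorem is usually reduced to the locally convex fixed point theorem, and each link is sound. Two of your side remarks deserve emphasis. First, you are right that convexity of $\mathscr D$ is indispensable and that the statement as printed omits it (the antipodal map on the unit sphere of $\mathbb R^2$ satisfies (i) and (ii) with no fixed point); in the paper's actual application the set $\mathscr D=\mathcal D_1\times\mathcal D_2$ of Lemma~\ref{lemmas4} is convex, so the omission is only in the statement, not in the use. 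Second, your care about reading (ii) sequentially versus topologically is warranted: the closed-graph-to-usc step needs the graph closed in the weak topology (nets), and the sequential version suffices only because the spaces here are reflexive and separable, making the weak topology metrizable on bounded sets. With convexity of $\mathscr D$ restored, your proof is complete.
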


The remainder of the paper is organized as follows. In   Section \ref{Section3} is concerned with the study of the properties of solution set for the evolution multivalued quasi-variational inequality, Problem~\ref{problems1}, in which we prove the existence and compactness results. As a byproduct, we also give several important existence theorems for some particular cases of Problem~\ref{problems1}.
However, in Section \ref{Section4}, we pay our attention to consider a nonlinear and nonsmooth optimal control problem driven by  evolution multivalued quasi-variational inequality, Problem~\ref{problems1}, and to established the existence theorem for the nonlinear and nonsmooth optimal control problem. It should be mentioning that such nonlinear and nonsmooth optimal control problem could be a model to finding the optimal un-known parameters for evolution multivalued quasi-variational inequalities, when we have measured some data for the solutions of  evolution multivalued quasi-variational inequalities in advance.  Finally, in Section~\ref{Section5}, an inclusion is provided.

\section{Existence and compactness results}\label{Section3}
The section is concerned with the study of the properties of the set of solutions, which contains the nonemptiness and compactness, to the evolution multivalued quasi-variational inequality, Problem~\ref{problems1}. 

To this end, we suppose that the data of Problem~\ref{problems1} fulfill the following conditions: 

\begin{enumerate}
		\item[\textnormal{H($\mathscr K$):}] $\mathscr K$ is a nonempty, closed and convex set of a Banach space $X$.
	\item[\textnormal{H($\mathscr L$):}] $\mathscr L\colon D(\mathscr L)\subset X\to X^* $ is a linear, densely defined and maximal monotone operator. 
	\item[\textnormal{H($\mathscr M$):}] $\mathscr M\colon \mathscr K\to 2^{\mathscr K}$ has nonempty, closed and convex values such that  $0\in \mbox{int}\left(\cap_{w\in \mathscr K}\mathscr M(w)\right)$, and has weakly closed graph with respect to $\mathscr L$, i.e., 
	\begin{enumerate}
		\item[] $x\in \mathscr M(y)\cap D(\mathscr L)$ holds, whenever $\{y_n\},\{x_n\}\subset \mathscr K$ fulfill $x_n\in \mathscr M(y_n)\cap D(\mathscr L)$, $y_n\wto y$   in $\mathcal W$ and $x_n\wto x$ in $\mathcal W$.
	\end{enumerate}
\item[\textnormal{H($\mathscr F$):}] $\mathscr F\colon X\to X^*$ is a bounded, monotone and hemicontinuous function such that there exist constants $c_\mathscr F>0$ and $d_\mathscr F\ge 0$ satisfying 
\begin{eqnarray}\label{ineqF}
	\langle\mathscr F(x),x\rangle_X\ge c_\mathscr F\|x\|_X^p-d_\mathscr F\mbox{ for all $x\in X$},
\end{eqnarray}
with some $1<p<+\infty$. 
\item[\textnormal{H($\mathscr G$):}]  $\mathscr G\colon Y\to 2^{Y^*}$ is a multivalued mapping defined in a reflexive Banach space $Y$ with nonempty, closed and convex values, and  has a sequentially strongly-weakly closed  graph    such that there are constants $c_\mathscr G, d_\mathscr G\ge 0$ fulfilling 
\begin{eqnarray}\label{Ggrowthcondition} 
\|\xi\|_{Y^*}\le c_\mathscr G\|z\|_Y^{p-1}+d_\mathscr G
\end{eqnarray}
for all $\xi\in \mathscr G(z)$ and all $z\in Y$. 
\item[\textnormal{H($\mathscr E$):}] $\mathscr E\in X^*$.
\item[\textnormal{H($\gamma$):}]  $\gamma \colon X\to Y$ is linear and continuous such that $\gamma|_{\mathcal W}$ is compact (i.e., $\gamma$ is compact on $\mathcal W$), where $\mathcal W:=\{x\in X\cap D(\mathscr L)\,\mid\,\mathscr Lx\in X^*\}$.
\item[\textnormal{H($\Psi$):}] $\Psi\colon X\times X\to \mathbb R$ satisfies the following properties: 
\begin{enumerate}
	\item[(i)] for each $x\in X$, the function $\Psi(x,\cdot)\colon X\to \mathbb R$ is convex and lower semicontinuous;
	\item[(ii)] there is a bounded function $b_\Psi\colon  X^3\to \mathbb R_+$ such that  for each bounded set $B\subset X$ it holds
		\begin{eqnarray*}
		\lim_{\|x\|_X\to +\infty}\frac{b_\Psi(z,0,x)}{\|x\|_X^{p-\eta}}=0 \mbox{ uniformly in $z\in B$}
	\end{eqnarray*}
	with $0<\eta<p$ and  
	\begin{eqnarray}\label{inePsi}
		\Psi(x,y_1)-\Psi(x,y_2)\le b_\Psi(x,y_1,y_2)\|y_1-y_2\|_X^\eta
	\end{eqnarray}
	for all $x,y_1,y_2\in X$, where   $\mathbb R_+:=[0,+\infty)$; 
	\item[(iii)] the inequality is valid 
	\begin{eqnarray*}
		\limsup_{n\to\infty}\left(\Psi(w_n,y_n)-\Psi(w_n,x_n)\right)\le \Psi(w,y)-\Psi(w,x),
		\end{eqnarray*} 
		whenever $\{w_n\},\{y_n\},\{x_n\}\subset \mathscr K\cap D(\mathscr L)$ are such that  $w_n \wto w$ and $x_n\wto x$ in $\mathcal W$ (i.e., $x_n\wto x$ and $w_n\wto w$ in $X$, and $\mathscr Lx_n\wto \mathscr Lx$ and $\mathscr Lw_n\wto \mathscr Lw$ in $X^*$) and $y_n\to y$  in $X$;
		\item[(iv)] $0\in D(\partial \Psi(v,\cdot))$ for all $v\in X$ (where $D( \partial \Psi(v,\cdot))$ is the effective domain of function   $X\ni x \mapsto  \partial\Psi(v,x)\subset X^*$) with $|\Psi(v,0)|\le e_\Psi$ for all $v\in X$, and there exist  constants $c_\Psi,d_\Psi\ge 0$ such that  
		\begin{eqnarray*}
			\Psi(v,y)\ge -c_\Psi\|y\|_X^\beta-d_\Psi
			\end{eqnarray*}
			with some $1\le\beta<  p$, for all $v,y\in X$. 
\end{enumerate}
\end{enumerate}

\begin{remark}\label{remarks3.1}
 Because of the reflexivity of $X$, so,  $\mathcal W$ is endowed the graph norm
$$
\|x\|_{\mathcal W}:=\|x\|_X+\|\mathscr Lx\|_{X^*}\mbox{ for all $x\in \mathcal W$}
$$
to be a reflexive Banach space.
	
Indeed, the assumptions imposed above are mild. There are a plenty of functions satisfy these assumptions. Here, we provide several particular cases: 
\begin{itemize}
\item[{\rm (i)}] A classical example for maximal operator $\mathscr L$ is that $\mathscr L=\frac{d }{dt}$, namely, $\mathscr L$ is the time derivative operator in the distribution sense. 
\item[{\rm (ii)}] Let  $1<p<+\infty$ and $T>0$ be fixed, and $\Omega \subset \mathbb R^N$ be a  bounded domain with $N\ge 2$.   We set $ X= L^p(0,T; W^{1,p}_0(\Omega))$. Then, the function $\mathscr F\colon X\to X^*$ defined by
\begin{eqnarray*}
\langle\mathscr F(u),v\rangle_X:=\int_0^T\int_\Omega|\nabla u(x,t)|^{p-2}(\nabla u(x,t),\nabla v(x,t))_{\mathbb R^N}\,dx\,dt\mbox{ for all $u,v\in X$}
\end{eqnarray*} 
enjoys hypothesis H($\mathscr F$). In fact, we have 
\begin{itemize}
\item[$\bullet$] For every $u\in X$, it holds 
\begin{eqnarray*}
	\langle\mathscr F(u),u\rangle_X:=\int_0^T\int_\Omega|\nabla u(x,t)|^{p}\,dx\,dt=\|u\|_X^p\mbox{ for all $u\in X$},
\end{eqnarray*}
thus, inequality (\ref{ineqF}) is true with $c_\mathscr F=1$ and $d_\mathscr F=0$.
\item[$\bullet$] The estimates indicate that $\mathscr F$ is a bounded mapping
\begin{align*}
\|\mathscr F u\|_{X^*}=&\sup_{v\in X,\|v\|_X=1}\langle\mathscr Fu,v\rangle_X \\
=&\sup_{v\in X,\|v\|_X=1}\int_0^T\int_\Omega|\nabla u(x,t)|^{p-2}(\nabla u(x,t),\nabla v(x,t))_{\mathbb R^N}\,dx\,dt\\
\le &\sup_{v\in X,\|v\|_X=1}\left(\int_0^T\int_\Omega|\nabla u(x,t)|^{p}\,dx\,dt\right)^{\frac{p-1}{p}}\left( \int_0^T\int_\Omega|\nabla v(x,t)|^p\,dx\,dt\right)^\frac{1}{p}\\
=&\|u\|_X^{p-1}\mbox{ for all $u\in X$},
\end{align*} 
where the last inequality is obtained by utilizing H\"oder inequality. 
\item[$\bullet$] The monotonicity of $\mathbb R^N\ni\xi\mapsto |\xi|^{p-2}\xi\in \mathbb R^N$ guarantees that $\mathscr F$ is monotone as well. 
\item[$\bullet$] Let $\{u_n\}\subset X$ be such that $u_n\to u$ in $X$. Notice that 
\begin{eqnarray*}
\|\mathscr F u-\mathscr Fu_n\|_{X^*}\le \left(\int_0^T\int_\Omega|(|\nabla u(x,t)|^{p-2}\nabla u-|\nabla u_n|^{p-2}\nabla u_n)|^p\,dx\,dt\right)^{\frac{p-1}{p}},
\end{eqnarray*}
so, it could apply Lebesgue dominated convergence theorem to show that $\mathscr F$ is continuous. 
\end{itemize}
\item[{\rm(iii)}] There are a plenty of multivalued functions which satisfy the hypotheses $H(\mathscr G)$. Here, we point out that when $\mathscr G$ is formulated by the Clarke subdifferential operator of a locally Lipschitz function $J\colon Y\to \mathbb R$, i.e., $\mathscr G=\partial_C J$, in which $\partial_C J$ (the Clarke subdifferential operator of $J$) fulfills the  growth condition
\begin{eqnarray*}
	\|\xi\|_{Y^*}\le c_J\|w\|_Y^{p-1}+d_J
\end{eqnarray*}
for all $\xi\in \partial_C J(w)$ and $w\in Y$. 
Then, the multivalued function $\mathscr G=\partial_C J$ reads all conditions of $H(\mathscr G)$ (see Theorem~\ref{theorems2} below).  
\item[{\rm(iv)}] Let $\mathcal W=\{u\in L^p(0,T;W^{1,p}(\Omega))\,\mid\,u'\in L^q(0,T;W^{-1,p}(\Omega)):=L^p(0,T;W^{1,p}(\Omega))^*\}$ with $\frac{1}{p}+\frac{1}{q}=1$. The bifunction $\Psi\colon X\times X\to  \mathbb R$ defined by
\begin{eqnarray*}
	\Psi(v,u):=\int_{D\times [0,T] }\theta(v(x,t))|u(x,t)|^\beta\,dx\,dt
\end{eqnarray*}
enjoys all properties of H($\Psi$), where 
$X=L^p(0,T;W^{1,p}(\Omega))$ and $D\subset \overline \Omega$ is a nonempty subset with positive measure such that $W^{1,p}(\Omega)$ embeds compactly to $L^p(D)$, and $\theta\colon \mathbb R\to (0,+\infty)$ is Lipschitz continuous with constant $L_\theta>0$ and there are two constants $0<c_\theta\le d_\theta$ such that 
\begin{eqnarray*}
	c_\theta\le \theta(s)\le d_\theta\mbox{ for all $s\in \mathbb R$}. 
\end{eqnarray*}
Indeed, we have 
\begin{itemize}
	\item[$\bullet$] for any $v\in X$ fixed, by the definition of $\Psi$, we can see that $X\ni u\mapsto \Psi(v,u)\in[0,+\infty)$ is convex and continuous.
	\item[$\bullet$] for any $w,y_1,y_2\in X$, if $\beta=1$, we have
		\begin{align*}
		\Psi(w,y_1)-\Psi(w,y_2)=&\int_{D\times [0,T]}\theta(w(x,t))\left(|y_1(x,t)|-|y_2(x,t)|\right)\,dx\,dt\\
		\le &d_\theta\int_{D\times [0,T] } |y_1(x,t)-y_2(x,t)|\,dx\,dt\\
		\le &d_\theta (T|D|)^\frac{1}{p'}C_X(p)\|y_1-y_2\|_X,
		\end{align*}
		namely, inequality (\ref{inePsi}) holds with $b_\Psi\equiv d_\theta (T|D|)^\frac{1}{p'}$  and $\eta=1$, where $C_X(p)>0$ is the smallest constant  such that 
		$$
		\|z_1-z_2\|_{L^p(D\times [0,T])}\le C_X(p) \|z_1-z_2\|_X\mbox{ for all $z_1,z_2\in X$}
		$$
		(this inequality holds by using Aubin's lemma, and Sobolev embedding theorem or trace theorem).
	However, when $1<\beta<p$, one has 
	\begin{align*}
		\Psi(w,y_1)-\Psi(w,y_2)=&\int_{D\times [0,T] }\theta(w(x,t))\left(|y_1(x,t)|^\beta-|y_2(x,t)|^\beta\right)\,dx\,dt\\
		=&\int_{D\times [0,T] } \theta(w(x,t))\beta\zeta(y_1(x,t),y_2(x,t))^{\beta-1}(|y_1(x,t)|-|y_2(x,t)|)\,dx\,dt\\
		\le &\int_{D\times [0,T] } \theta(w(x,t))\beta\zeta(y_1(x,t),y_2(x,t))^{\beta-1}|y_1(x,t)-y_2(x,t)|\,dx\,dt\\
		\le &d_\theta\beta\int_{D\times [0,T] } \zeta(y_1(x,t),y_2(x,t))^{\beta-1}|y_1(x,t)-y_2(x,t)|)\,dx\,dt\\
		\le&d_\theta\beta C_X(\beta)\left(\int_{D\times [0,T] } \zeta(y_1(x,t),y_2(x,t))^\beta\,dx\,dt\right)^{\beta'}\|y_1-y_2\|_X^\beta,
	\end{align*}
	where $\zeta(y_1(x,t),y_2(x,t))\in \left[\min\{|y_1(x,t)|,|y_2(x,t)|\},\max\{|y_1(x,t)|,|y_2(x,t)|\}\right]$. This means that (\ref{inePsi}) is valid with $$b_\Psi(w,y_1,y_2)=d_\theta\beta C_X(\beta)\left(\int_{D\times [0,T] } \max\{|y_1(x,t)|,|y_2(x,t)|\}^\beta\,dx\,dt\right)^{\beta'}$$
	and $\eta=\beta$. 
	\item[$\bullet$] let sequences $\{w_n\},\{y_n\},\{z_n\}\subset X\cap D(\mathscr L)$ be such that $w_n\wto w$ and $z_n\wto z$ in $\mathcal W$, and $y_n\to y$  in $X$. Then, by  Aubin's lemma, the embedding from $\mathcal W$ to $L^p(D\times(0,T))$ is compact.  So, we may assume that $w_n(x,t)\to w(x,t)$, $y_n(x,t)\to y(x,t)$ and $z_n(x,t)\to z(x,t)$ for a.e. $(x,t)\in D\times [0,T]$. Therefore, from the continuity of $\theta$ and Lebesgue Dominated Convergence theorem, it finds 
	\begin{align*}
	&\lim_{n\to\infty}\left(	\Psi(w_n,y_n)-\Psi(w_n,x_n)\right)\\
	=&\lim_{n\to\infty}\left(\int_0^T\int_D\theta(w_n(x,t))\left(|y_n(x,t)|^\beta-|x_n(x,t)|^\beta\right)\,dx\,dt\right)\\
	=&\int_0^T\int_D\lim_{n\to\infty}\theta(w_n(x,t))\left(|y_n(x,t)|^\beta-|x_n(x,t)|^\beta\right)\,dx\,dt\\
	=&\int_0^T\int_D\theta(w(x,t))\left(|y(x,t)|^\beta-|x(x,t)|^\beta\right)\,dx\,dt\\
	=&\Psi(w,y)-\Psi(w,z).
	\end{align*}
	So, hypothesis H($\Psi$)(iii) is available. 
	\item[$\bullet$] by the definition of $\Psi$, we can see that $\Psi(v,u)\ge 0$ for all $v,u\in X$, namely, it could take $c_\Psi=d_\Psi=0$.
\end{itemize}
\item[{\rm(v)}] Let $Y=L^p(\partial \Omega\times (0,T))$, $X=L^p(0,T;W^{1,p}(\Omega))$ and $\mathcal W:=\{u\in X\,\mid\,u'\in X^*\}$. Then, from trace theorem, we can see that $\gamma \colon \mathcal W\to Y$ is compact. 
\end{itemize}
\end{remark}
	
The main result of this section is the following theorem which reveals that the solution set of Problem~\ref{problems1} is nonempty and weakly compact in $\mathcal W$.

\begin{theorem}\label{theorems1}
Under the assumptions of  H($\mathscr K$), H($\mathscr L$), H($\mathscr F$), H($\mathscr G$), H($\gamma$), H($\mathscr E$), H($\Psi$) and H($\mathscr M$), if the inequality holds  
	\begin{eqnarray}\label{smallnesscondition}
	c_\mathscr F>c_\mathscr G\|\gamma\|^p,
\end{eqnarray}
then the solution set of Problem~\ref{problems1} is nonempty and weakly compact in $\mathcal W$. Moreover, if $\mathscr F$ satisfies $(S_+)$-property with respect to $\mathscr L$, namely, 
\begin{itemize}
	\item[] if $u_n\wto u$ in $\mathcal W$ and $\limsup_{n\to\infty}\langle\mathscr F(x_n),x_n-x\rangle_X\le 0$, then we have $x_n\to x$ in $X$, 
\end{itemize}
then the solution set of Problem~\ref{problems1} is compact in $X$ as well.
	\end{theorem}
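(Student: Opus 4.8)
\emph{The plan is} to recast Problem~\ref{problems1} as a fixed point problem for a suitable solution map and to invoke Kluge's theorem (Theorem~\ref{KlugeFPT}). First I would freeze the solution-dependent data: for $w$ in a closed bounded subset $\mathscr D$ of $\mathscr K \cap \mathcal W$, consider the auxiliary inequality obtained by replacing the constraint $\mathscr M(x)$ with $\mathscr M(w)$ and the bifunction $\Psi(x,\cdot)$ with $\Psi(w,\cdot)$, namely
\begin{equation*}
\text{find } x \in \mathscr M(w) \cap D(\mathscr L),\ \xi \in \mathscr G(\gamma x):\quad \langle \mathscr L x + \mathscr F x - \mathscr E, y - x\rangle_X + \langle \xi, \gamma(y - x)\rangle_Y \ge \Psi(w, x) - \Psi(w, y)
\end{equation*}
for all $y \in \mathscr M(w) \cap D(\mathscr L)$. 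The multivalued feedback $\mathscr G$ would be resolved by a second, inner freezing: fixing a selection $\eta \in \mathscr G(\gamma v)$ reduces the auxiliary inequality to a genuine variational inequality governed by the maximal monotone operator $\mathscr L + \mathscr F$ (maximal as a sum of a linear densely defined maximal monotone operator and a bounded monotone hemicontinuous, hence maximal monotone, everywhere-defined operator), the proper convex lower semicontinuous functional $\Psi(w,\cdot) + I_{\mathscr M(w)}$, and the constant forcing $\mathscr E - \gamma^*\eta \in X^*$. Its solvability follows from Proposition~\ref{Proexistence}, whose condition~(iii) is supplied by the common-core property $0 \in \mathrm{int}(\cap_w \mathscr M(w))$ and whose coercivity~(iv) comes from H($\mathscr F$) and H($\Psi$)(iv). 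Since the governing operator is monotone, this inner solution set is convex and weakly closed, which is precisely what is needed to apply Theorem~\ref{KlugeFPT} to close first the $\mathscr G$-loop over $\eta$ and then the quasi-variational loop over $w$.

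The quantitative core is a uniform a priori bound. Testing any solution $x$ of the frozen problem with the admissible point $y = 0$ (permitted since $0 \in \mathrm{int}(\cap_w \mathscr M(w))$ and $\mathscr L 0 = 0$), discarding the nonnegative term $\langle \mathscr L x, x\rangle_X \ge 0$, and inserting the coercivity of $\mathscr F$, the growth of $\mathscr G$ via $\|\gamma x\|_Y \le \|\gamma\|\,\|x\|_X$, and the lower bound on $\Psi$ from H($\Psi$)(iv), I obtain
\begin{equation*}
(c_\mathscr F - c_\mathscr G \|\gamma\|^p)\|x\|_X^p \le \|\mathscr E\|_{X^*}\|x\|_X + d_\mathscr G \|\gamma\|\,\|x\|_X + c_\Psi \|x\|_X^\beta + (d_\mathscr F + d_\Psi + e_\Psi).
\end{equation*}
The smallness condition~(\ref{smallnesscondition}) makes the leading coefficient strictly positive, and since $\beta < p$ the right-hand side is of lower order in $\|x\|_X$; hence $\|x\|_X \le R$ with $R$ independent of $w$. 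A bound on $\|\mathscr L x\|_{X^*}$, and thereby on $\|x\|_{\mathcal W}$, is then extracted from the inequality using the maximal monotonicity of $\mathscr L$ together with the boundedness of $\mathscr F$ (H($\mathscr F$)) and of the selections $\xi$ (the growth~(\ref{Ggrowthcondition})). This fixes a ball $\mathscr D$ in $\mathcal W$ that the solution map carries into itself.

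It then remains to verify the hypotheses of Theorem~\ref{KlugeFPT} for the map $\mathcal Q \colon \mathscr D \to 2^{\mathscr D}$, $\mathcal Q(w) = \{x : x \text{ solves the } w\text{-frozen problem}\}$: nonemptiness (from the first two steps), convexity and closedness of its values, and weak closedness of $\mathrm{Gr}(\mathcal Q)$. A fixed point $x \in \mathcal Q(x)$ is exactly a solution of Problem~\ref{problems1}. The weak compactness of the solution set in $\mathcal W$ then follows by combining the uniform bound with this weak closedness. If in addition $\mathscr F$ enjoys the $(S_+)$-property with respect to $\mathscr L$, I would take a sequence of solutions converging weakly in $\mathcal W$, test the inequality so as to force $\limsup_n \langle \mathscr F x_n, x_n - x\rangle_X \le 0$, and upgrade weak to strong convergence in $X$, yielding compactness in $X$.

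\emph{The hard part will be} the weak closedness of $\mathrm{Gr}(\mathcal Q)$, i.e.\ the passage to the limit in the frozen inequality along $w_n \weak w$, $x_n \weak x$ in $\mathcal W$, where four effects must be controlled at once: (a) the unbounded maximal monotone term, for which only $\mathscr L x_n \weak \mathscr L x$ is available, handled through the weak lower semicontinuity of $x \mapsto \langle \mathscr L x, x\rangle_X$ so that $\limsup_n \langle \mathscr L x_n, y - x_n\rangle_X \le \langle \mathscr L x, y - x\rangle_X$; (b) the nonmonotone multivalued feedback, where the compactness of $\gamma$ on $\mathcal W$ (H($\gamma$)) gives $\gamma x_n \to \gamma x$ strongly in $Y$ and the strongly-weakly closed graph of $\mathscr G$ (H($\mathscr G$)) transfers the limit to a selection $\xi \in \mathscr G(\gamma x)$; (c) the moving constraint, where the limit membership $x \in \mathscr M(w) \cap D(\mathscr L)$ comes from the $\mathscr L$-weakly closed graph of $\mathscr M$ (H($\mathscr M$)) and admissibility of test functions is managed via $0 \in \mathrm{int}(\cap_w \mathscr M(w))$; and (d) the bifunction, handled directly by H($\Psi$)(iii). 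The other delicate point is the convexity of the values $\mathcal Q(w)$ in the presence of the nonmonotone $\mathscr G$, which is exactly why I would freeze the $\mathscr G$-selection before applying Proposition~\ref{Proexistence}, so that convexity is inherited from the solution set of a genuinely monotone variational inequality.
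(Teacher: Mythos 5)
Your proposal follows the paper's overall strategy---freeze the quasi-variational data, solve the frozen inequality via Proposition~\ref{Proexistence} with $\varphi=\Psi(w,\cdot)+I_{\mathscr M(w)}$, derive a uniform a priori bound from the smallness condition~(\ref{smallnesscondition}), apply Kluge's theorem, pass to the limit by a Minty-type argument using H($\Psi$)(iii), the compactness of $\gamma$ on $\mathcal W$ and the strong--weak closedness of $\mathscr G$, and finally upgrade to strong compactness via the $(S_+)$-property. However, the fixed-point architecture you describe has a genuine gap. You propose to close the $\mathscr G$-loop first (for each frozen $w$) and then run Kluge over $w$ with $\mathcal Q(w)=\{x:\ x \text{ solves the } w\text{-frozen problem}\}$. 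Theorem~\ref{KlugeFPT} requires convex values, and this outer set is the set of first components of fixed points of your inner loop; it is not convex in general, because $x\mapsto\mathscr G(\gamma x)$ is nonmonotone and non-affine. Freezing the selection before invoking Proposition~\ref{Proexistence} only makes the \emph{innermost} solution set convex; it does nothing for the intermediate set on which the outer Kluge iteration would act. The paper avoids this by running a \emph{single} Kluge iteration on the product space $\mathscr K\times Y^*$, with $\mathcal Q(z,\xi)=\bigl(\mathscr S(z,\xi),\mathscr G(\gamma z)\bigr)$; its values are convex as the product of a point and the convex set $\mathscr G(\gamma z)$, the feedback being evaluated at the frozen $z$ rather than at the solution.

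This points to a second omission: for $\mathcal Q$ to have convex values one needs $\mathscr S(z,\xi)$ to be a singleton, which requires \emph{strict} monotonicity of $\mathscr F$---an assumption not contained in H($\mathscr F$) and nowhere addressed in your plan (with mere monotonicity the frozen solution set is convex but the composite map loses convexity again). The paper's Step~1 proves existence under the additional strict monotonicity, and Step~2 removes it by perturbing with $\varepsilon_n\mathcal J$ ($\mathcal J$ the normalized duality map), re-deriving the uniform bounds for the perturbed solutions and passing to the limit; your argument as written only covers strictly monotone $\mathscr F$. The remaining ingredients---testing with $y=0$ to get $(c_{\mathscr F}-c_{\mathscr G}\|\gamma\|^p)\|x\|_X^p$ dominated by lower-order terms, bounding $\|\mathscr Lx\|_{X^*}$ via the common ball $O_X(0,d_0)\subset\mathscr M(z)$, and the recovery-sequence limit passage---do match the paper's Lemmas~\ref{lemmas2}--\ref{lemmas4} and Steps~3--4.
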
 

To establish the existence of solutions for Problem~\ref{problems1}, let us  consider the following parameter evolution variational inequality: 
\begin{problem}\label{problems2}
	Given elements $(z,\xi,\mathscr E)\in \mathscr K\times Y^*\times X^*$, find $x\in \mathscr M(z)\cap D(\mathscr L)$ such that 
	\begin{eqnarray}\label{eqns3.4}
		\langle\mathscr L(x)+\mathscr F(x)-\mathscr E,y-x\rangle_X+\langle\xi,\gamma(y-x)\rangle_Y\ge \Psi(z,x)-\Psi(z,y) 
	\end{eqnarray} 	
	for all $y\in \mathscr M(z)\cap D(\mathscr L)$.
	\end{problem}

It could apply Proposition~\ref{Proexistence}  to conclude that Problem~\ref{problems2} is solvable.
	
\begin{lemma}\label{lemmas1}
	Under the assumptions of Theorem~\ref{theorems1}, for each $(z,\xi)\in \mathscr K\times Y^*$ the solution set of Problem~\ref{problems2}, denoted by $\mathscr S(z,\xi)$, is nonempty in $\mathcal W$. Moreover, when $\mathscr F$ is strictly monotone, $\mathscr S(z,\xi)$ is singleton.  
\end{lemma}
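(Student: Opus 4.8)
The plan is to recast Problem~\ref{problems2}, for the frozen data $(z,\xi,\mathscr E)$, as a single variational inequality of the type handled by Proposition~\ref{Proexistence}, and then to read off existence directly. Write $K:=\mathscr M(z)$, which by H($\mathscr M$) is nonempty, closed and convex with $0\in\ints K$, and let $\gamma^*\colon Y^*\to X^*$ denote the adjoint of $\gamma$, so that $\langle\xi,\gamma v\rangle_Y=\langle\gamma^*\xi,v\rangle_X$ for all $v\in X$. I set
\[
A:=\mathscr L+\mathscr F+\gamma^*\xi-\mathscr E,\qquad D(A):=D(\mathscr L),\qquad \varphi:=\Psi(z,\cdot)+I_K,
\]
where $I_K$ is the indicator function of $K$ and $\gamma^*\xi-\mathscr E\in X^*$ is a fixed constant. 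With these choices the variational inequality $\langle Ax,y-x\rangle_X+\varphi(y)-\varphi(x)\ge0$ for all $y\in X$ furnished by Proposition~\ref{Proexistence} forces any solution $x$ into $\dom\varphi\cap D(A)=K\cap D(\mathscr L)=\mathscr M(z)\cap D(\mathscr L)$, and, on restricting the test elements to $y\in\mathscr M(z)\cap D(\mathscr L)$ (where $I_K(y)=I_K(x)=0$), it reduces to exactly inequality~(\ref{eqns3.4}). Thus every solution produced by Proposition~\ref{Proexistence} lies in $\mathscr S(z,\xi)$, and it suffices to verify hypotheses (i)--(iv).

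For (i), H($\mathscr F$) makes $\mathscr F$ monotone, hemicontinuous and everywhere defined, hence maximal monotone; adding the constant $\gamma^*\xi-\mathscr E$ preserves this, and since $\mathscr L$ is maximal monotone by H($\mathscr L$) while $\mathscr F+\gamma^*\xi-\mathscr E$ is bounded with full domain, the sum $A$ is maximal monotone. For (ii), $\Psi(z,\cdot)$ is convex and lower semicontinuous by H($\Psi$)(i) and finite-valued, hence continuous, and $I_K$ is convex and lower semicontinuous, so $\varphi$ is convex, lower semicontinuous and finite on the nonempty set $K$, whence $\varphi\not\equiv+\infty$. For (iii), since $\Psi(z,\cdot)$ is finite everywhere one has $\dom\varphi=K$, so that $\ints K\subset D(\partial\varphi)$ and therefore $\ints K\subset\ints D(\partial\varphi)$; as $0\in\ints K$ by H($\mathscr M$) and $0\in D(\mathscr L)=D(A)$, the point $0$ lies in $D(A)\cap\ints D(\partial\varphi)$.

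It remains to check the coercivity (iv) with $u_0=0$. Any $u^*\in A(u)+\partial\varphi(u)$ has the form $u^*=\mathscr Lu+\mathscr Fu+\gamma^*\xi-\mathscr E+\zeta+\nu$ with $\zeta\in\partial\Psi(z,\cdot)(u)$ and $\nu\in N_K(u)$. Using $\langle\mathscr Lu,u\rangle_X\ge0$ (linear monotonicity), the coercivity bound~(\ref{ineqF}) for $\langle\mathscr Fu,u\rangle_X$, the subgradient inequality together with the lower bound in H($\Psi$)(iv) to get $\langle\zeta,u\rangle_X\ge\Psi(z,u)-\Psi(z,0)\ge-c_\Psi\|u\|_X^\beta-d_\Psi-e_\Psi$, the normal-cone inequality $\langle\nu,u\rangle_X\ge0$ (valid since $0\in K$), and $\langle\gamma^*\xi-\mathscr E,u\rangle_X\ge-\|\gamma^*\xi-\mathscr E\|_{X^*}\|u\|_X$, I obtain a lower bound of the shape $\langle u^*,u\rangle_X\ge c_\mathscr F\|u\|_X^p-c_\Psi\|u\|_X^\beta-\|\gamma^*\xi-\mathscr E\|_{X^*}\|u\|_X-(d_\mathscr F+d_\Psi+e_\Psi)$. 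Because $1\le\beta<p$ and $1<p$, the term $c_\mathscr F\|u\|_X^p$ dominates, so this is positive once $\|u\|_X>r$ for suitable $r>0$; note that here $\xi$ is frozen, so no coupling occurs and the smallness condition~(\ref{smallnesscondition}) is not needed (it enters only when the fixed-point argument of Theorem~\ref{theorems1} closes the feedback loop). Hence Proposition~\ref{Proexistence} applies and $\mathscr S(z,\xi)\neq\emptyset$, with solutions lying in $D(\mathscr L)=\mathcal W$. Finally, if $\mathscr F$ is strictly monotone and $x_1,x_2\in\mathscr S(z,\xi)$, testing the inequality for $x_1$ with $y=x_2$ and that for $x_2$ with $y=x_1$ and adding cancels the terms involving $\mathscr E$, $\xi$ and $\Psi$, leaving $\langle\mathscr Lx_1-\mathscr Lx_2,x_1-x_2\rangle_X+\langle\mathscr Fx_1-\mathscr Fx_2,x_1-x_2\rangle_X\le0$; monotonicity of $\mathscr L$ then forces $\langle\mathscr Fx_1-\mathscr Fx_2,x_1-x_2\rangle_X\le0$, and strict monotonicity of $\mathscr F$ gives $x_1=x_2$. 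The only genuinely delicate step is the coercivity estimate, where one must keep the growth exponents $\beta$ and $1$ strictly below $p$ so that the coercivity of $\mathscr F$ overrides both the $\Psi$-term and the frozen source; the reformulation and the checks of (i)--(iii) are routine once $\gamma^*$ and $I_K$ are used to absorb the feedback/source and the constraint.
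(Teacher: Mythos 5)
Your proposal is correct and follows essentially the same route as the paper: reformulate Problem~\ref{problems2} via $\varphi=\Psi(z,\cdot)+I_{\mathscr M(z)}$ as an inclusion/variational inequality for the maximal monotone sum $\mathscr L+\mathscr F$ plus $\partial\varphi$, verify the hypotheses of Proposition~\ref{Proexistence} (with coercivity coming from \textnormal{H($\mathscr F$)} and the growth bounds on $\Psi$, the frozen $\xi$ entering only as a fixed linear perturbation), and obtain uniqueness under strict monotonicity by the standard two-test-function argument. The only deviations are cosmetic: you absorb $\gamma^*\xi-\mathscr E$ into $A$ and check the qualification condition $D(A)\cap\ints D(\partial\varphi)\neq\emptyset$ rather than $\ints D(A)\cap D(\partial\varphi)\neq\emptyset$ (which is in fact the more defensible choice, since $\ints D(\mathscr L)$ is typically empty), and you use the explicit lower bound from \textnormal{H($\Psi$)(iv)} in place of the paper's affine minorant of $\varphi$.
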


\begin{proof}
For any $(z,\xi)\in \mathscr K\times Y^*$  fixed, let $\varphi\colon X\to \overline{\mathbb R}$ be defined by 
\begin{eqnarray*}
	\varphi(x):=\Psi(z,x)+I_{\mathscr M(z)}(x)\mbox{ for all $x\in X$}, 
\end{eqnarray*}
where $I_{\mathscr M(z)}$ is the indicator function of set $\mathscr{M}(z)$. It is obvious that $\varphi$ is proper (indeed, $D(\varphi)=\mathscr M(z)$),  convex and lower semicontinuous. Using this function, we can take a standard procedure to obtain that $x\in \mathscr K$ is a solution of Problem~\ref{problems2}, if and only if, it solves the following inclusion with the sum of maximal monotone operators 
\begin{eqnarray}\label{eqns3.5}
	\mathscr Lx+\mathscr F(x)+\partial \varphi(x)\ni \mathscr E-\gamma^*\xi.
	\end{eqnarray}
Next, we are going to employ Proposition~\ref{Proexistence} to show that $\mathscr S(z,\xi)\neq\emptyset$.  
So, we shall verify that the operator $A\colon X\to X^*$ defined by 
\begin{eqnarray*}
	Ax:=\mathscr Lx+\mathscr Fx\mbox{ for all $x\in X$}
	\end{eqnarray*}
and the function $\varphi\colon X\to \overline{\mathbb R}$  
verify all conditions of Proposition~\ref{Proexistence}.
\begin{itemize}
	\item[$\bullet$] {\bf $A$ is maximal monotone.} Recall that $\mathscr F\colon X\to X^*$ is hemicontinuous, monotone and bounded, and $\mathscr L$ is maximal monotone, so, by virtue of the use of Corollary 1.1 of Barbu~\cite[p. 44]{Barbu1985}, we can see that $A$ is maximal monotone too.
	 	\item[$\bullet$] {\bf $\mbox{int}D(A)\cap D(\partial \varphi)\neq\emptyset$.} From hypothesis H($\mathscr L$) and the conditions $0\in \mbox{int}\left(\cap_{w\in \mathscr K}\mathscr M(w)\right)$ as well as $0\in D(\partial \Psi(z,\cdot))$ for all $z\in X$, we can see that $0\in \mbox{int}D(A)\cap D(\partial \varphi)$, namely,  $\mbox{int}D(A)\cap D(\partial \varphi)\neq\emptyset$.
	 \item[$\bullet$] {\bf $A+\partial \varphi$ is coercive.} Recall that $0\in D(A)\cap D(\partial \varphi)$, so, for any $(x,\zeta)\in Gr(A+\partial \varphi)\subset X\times X^*$, it can calculate by using monotonicity of $\mathscr L$ and inequality (\ref{ineqF}) that 
	 \begin{align*}
	 	\langle Ax+\zeta,x\rangle_X=&\langle \mathscr Lx+\mathscr F(x)+\zeta,x\rangle_X\\
	 	\ge&c_\mathscr F\|x\|_X^p-d_\mathscr F+ \varphi(x)-\varphi(0)\\
	 	\ge &c_\mathscr F\|x\|_X^p-d_\mathscr F-c_\varphi\|x\|_X-d_\varphi-\varphi(0)
	 \end{align*}
	 for some constants $c_\varphi,d_\varphi\ge 0$, 
	 where the last inequality is obtained by applying the well-known result that there exists an affine function which is less than function $\varphi$ (see for example, Proposition 1.10 of~\cite{Brezis-2011}). Then, it yields
	 \begin{eqnarray*}
	 	\lim_{\|x\|_X\to +\infty}	\frac{\langle Ax+\zeta,x\rangle_X}{\|x\|_X}=+\infty.
	 \end{eqnarray*}
	 This reveals that there is a constant $r_0>0$ such that for all $\|x\|_X>r_0$ with $x\in D(A)\cap D(\partial \varphi)$ the inequality is available
	 \begin{eqnarray*}
	 	\langle x^*,x\rangle_X>0\mbox{ for all $x^*\in A(x)+\partial \varphi(x)$},
	 \end{eqnarray*}
	 namely, $A+\partial \varphi$ is coercive.
\end{itemize} 
Now, it can invoke Proposition~\ref{Proexistence}  to admit that inclusion (\ref{eqns3.5}) is solvable, thus, $\mathscr S(z,\xi)$ is nonempty in $\mathcal W$. 

Moreover, suppose that $\mathscr F$ is strictly monotone and $x_1,x_2 \in \mathscr S(z,\xi)$. Let us put $y=x_2$ into (\ref{eqns3.4}) with $x=x_1$, and take $y=x_1$ in (\ref{eqns3.4}) with $x=x_2$. We sum up the resulting inequalities to get 
\begin{eqnarray*}
	\langle \mathscr L(x_1)-\mathscr L(x_2)+\mathscr F(x_1)-\mathscr F(x_2),x_1-x_2\rangle_X\le 0.  
\end{eqnarray*}
We infer that $x_1=x_2$. 
	\end{proof}
	
	Using this proposition, we could observe that $\mathscr S\colon \mathscr K\times Y^*\to  \mathcal W\cap\mathscr K$ is well-defined, when $\mathscr F$ is strictly monotone. In fact, $\mathscr S$ is usually called as parameter variational selection of Problem~\ref{problems2}.  Moreover,  we shall prove several critical properties of  $\mathscr S\colon \mathscr K\times Y^*\to \mathcal W\cap\mathscr K$. 
	
\begin{lemma}\label{lemmas2}
Assume that all conditions of Lemma~\ref{lemmas1} including the strict monotonicity of $\mathscr F$ are fulfilled. Then, $\mathscr S\colon \mathscr K\times Y^*\to \mathcal W\cap \mathscr K$ is a bounded function.
	\end{lemma}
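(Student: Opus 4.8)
The plan is to show that $\mathscr S$ carries bounded subsets of $\mathscr K\times Y^*$ into bounded subsets of $\mathcal W$. Fix a bounded set $B\subset \mathscr K\times Y^*$, pick $(z,\xi)\in B$ and write $x=\mathscr S(z,\xi)\in \mathscr M(z)\cap D(\mathscr L)$. \textbf{Step 1: an a priori bound in $X$.} Since H($\mathscr M$) gives $0\in\mbox{int}\bigl(\cap_{w\in\mathscr K}\mathscr M(w)\bigr)$, in particular $0\in\mathscr M(z)\cap D(\mathscr L)$, so $y=0$ is admissible in (\ref{eqns3.4}). Testing (\ref{eqns3.4}) with $y=0$ and rearranging yields
\[ \langle\mathscr L x,x\rangle_X+\langle\mathscr F(x),x\rangle_X\le \Psi(z,0)-\Psi(z,x)+\langle\mathscr E,x\rangle_X-\langle\xi,\gamma x\rangle_Y. \]
Because $\mathscr L$ is linear and monotone with $\mathscr L0=0$ we have $\langle\mathscr L x,x\rangle_X\ge 0$; combining this with the coercivity (\ref{ineqF}) of $\mathscr F$, the bounds $\Psi(z,0)\le e_\Psi$ and $\Psi(z,x)\ge -c_\Psi\|x\|_X^\beta-d_\Psi$ from H($\Psi$)(iv), and the estimates $\langle\mathscr E,x\rangle_X\le\|\mathscr E\|_{X^*}\|x\|_X$ and $-\langle\xi,\gamma x\rangle_Y\le\|\gamma\|\,\|\xi\|_{Y^*}\|x\|_X$, I obtain
\[ c_\mathscr F\|x\|_X^p\le d_\mathscr F+e_\Psi+d_\Psi+c_\Psi\|x\|_X^\beta+\bigl(\|\mathscr E\|_{X^*}+\|\gamma\|\,\|\xi\|_{Y^*}\bigr)\|x\|_X. \]
As $1\le\beta<p$ and $p>1$, the superlinear left-hand term dominates, and since $\|\xi\|_{Y^*}$ stays bounded on $B$ while $c_\mathscr F,d_\mathscr F,c_\Psi,d_\Psi,e_\Psi,\|\mathscr E\|_{X^*},\|\gamma\|$ are fixed, this forces $\|x\|_X\le R_1$ with $R_1$ independent of $(z,\xi)\in B$.

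\textbf{Step 2: bounding $\|\mathscr L x\|_{X^*}$.} This is the delicate part, and it is where I would \emph{not} try to read the bound off the inclusion (\ref{eqns3.5}): there $\mathscr L x=\mathscr E-\gamma^*\xi-\mathscr F(x)-\eta$ with $\eta\in\partial\varphi(x)$, but $\partial\varphi$ contains the normal cone $N_{\mathscr M(z)}(x)$, which is unbounded when $x$ touches $\partial\mathscr M(z)$, so this route fails. Instead I would test (\ref{eqns3.4}) against interior directions. By H($\mathscr M$) there is $\rho>0$ with $\overline B_X(0,\rho)\subset\mathscr M(z)$ for every $z\in\mathscr K$; since $D(\mathscr L)$ is dense and a subspace, $\overline B_X(0,\rho)\cap D(\mathscr L)$ is dense in $\overline B_X(0,\rho)$ and symmetric. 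For any such $v$, taking $y=v$ in (\ref{eqns3.4}) and rearranging gives
\[ \langle\mathscr L x,x\rangle_X-\langle\mathscr L x,v\rangle_X=\langle\mathscr L x,x-v\rangle_X\le \Psi(z,v)-\Psi(z,x)+\langle\mathscr F(x)-\mathscr E,v-x\rangle_X+\langle\xi,\gamma(v-x)\rangle_Y. \]
Using Step 1, the boundedness of $\mathscr F$, the inequality (\ref{inePsi}) with the bounded function $b_\Psi$ evaluated on the bounded arguments $(z,v,x)$, and the bounds on $\mathscr E,\xi,\gamma$, the right-hand side is dominated by a constant $C$ independent of $(z,\xi)\in B$ and of $v\in\overline B_X(0,\rho)\cap D(\mathscr L)$. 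Since $\langle\mathscr L x,x\rangle_X\ge 0$, this yields $-\langle\mathscr L x,v\rangle_X\le C$, and replacing $v$ by $-v$ gives $|\langle\mathscr L x,v\rangle_X|\le C$. As $\mathscr L x\in X^*$ is continuous and $\overline B_X(0,\rho)\cap D(\mathscr L)$ is dense in $\overline B_X(0,\rho)$, I conclude $\|\mathscr L x\|_{X^*}=\rho^{-1}\sup_{\|v\|_X\le\rho}\langle\mathscr L x,v\rangle_X\le C/\rho=:R_2$, again uniformly on $B$.

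\textbf{Conclusion and the main obstacle.} Combining the two steps, $\|x\|_{\mathcal W}=\|x\|_X+\|\mathscr L x\|_{X^*}\le R_1+R_2$ uniformly for $(z,\xi)\in B$, so $\mathscr S(B)$ is bounded in $\mathcal W$, which is the assertion. Note that the smallness condition (\ref{smallnesscondition}) plays no role here: in Problem~\ref{problems2} the datum $\xi$ is a free parameter and $\mathscr G$ does not enter, so only the boundedness of $\|\xi\|_{Y^*}$ on $B$ is used; the coupling $\xi\in\mathscr G(\gamma x)$ and hence (\ref{smallnesscondition}) will matter only later. The one genuinely nontrivial point is Step 2: the normal-cone contribution to $\partial\varphi$ is unbounded, so the $\mathcal W$-bound cannot be obtained from the operator inclusion and must instead be extracted directly from the variational inequality by probing it with the interior ball guaranteed by $0\in\mbox{int}\bigl(\cap_{w\in\mathscr K}\mathscr M(w)\bigr)$ together with the density of $D(\mathscr L)$.
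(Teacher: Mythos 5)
Your proof is correct and follows essentially the same strategy as the paper: an a priori bound in $X$ obtained by testing the variational inequality with $y=0$, followed by a bound on $\|\mathscr Lx\|_{X^*}$ obtained by testing against the interior ball $O_X(0,d_0)\subset\mathscr M(z)$ guaranteed by $0\in\mbox{int}\bigl(\cap_{w\in\mathscr K}\mathscr M(w)\bigr)$ and using $\langle\mathscr Lx,x\rangle_X\ge 0$. The only substantive deviation is in Step 1: the paper argues by contradiction using hypothesis H($\Psi$)(ii), namely the estimate $\Psi(z,0)-\Psi(z,x)\le b_\Psi(z,0,x)\|x\|_X^\eta$ together with $b_\Psi(z,0,x)/\|x\|_X^{p-\eta}\to 0$, whereas you argue directly from H($\Psi$)(iv), i.e. $\Psi(z,x)\ge -c_\Psi\|x\|_X^\beta-d_\Psi$ and $|\Psi(z,0)|\le e_\Psi$ with $\beta<p$; both hypotheses are assumed, so either route is legitimate (your direct version is in fact the one the paper itself uses in Lemma~\ref{lemmas3} and in Step 3 of the proof of Theorem~\ref{theorems1}). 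In Step 2 you are slightly more careful than the paper on one point: the test functions must lie in $\mathscr M(z)\cap D(\mathscr L)$, and your appeal to the density of the subspace $D(\mathscr L)$ in the ball, together with its symmetry, cleanly justifies recovering the full dual norm $\|\mathscr Lx\|_{X^*}$ from the supremum over admissible directions, a detail the paper's proof passes over silently. Your closing remark that the smallness condition (\ref{smallnesscondition}) is not needed at this stage is also consistent with the paper.
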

	\begin{proof}
		We first prove that $\mathscr S$ maps the bounded sets of $\mathscr K\times Y^*$ to bounded sets of $X$.  If the assertion is not true, then there exists a bounded set $\mathcal D\subset \mathscr K\times Y^*$ (i.e., there is a constant $c_\mathcal D>0$ such that $\|(z,\xi)\|_{X\times Y^*}:=\|z\|_X+\|\xi\|_{Y^*}\le c_\mathcal D$ for all $(z,\xi)\in \mathcal D$) such that we could take a sequence $\{(z_n,\xi_n)\}\subset \mathcal D$ satisfying $\|\mathscr S(z_n,\xi_n)\|_X\to+\infty $ when $n\to\infty$. For every $n\in\mathbb N$,  let $x_n=\mathscr S(z_n,\xi_n)$. Owing to $0\in D(\mathscr L)\cap \mathscr M(x)$ for each $x\in\mathscr K$, it takes $y=0$ as test function in  (\ref{eqns3.4}) with $x=x_n$ and $\xi=\xi_n$ to find
	\begin{align*}
			\langle\mathscr L(x_n)+\mathscr F(x_n)-\mathscr E,x_n\rangle_X+\langle\xi_n,\gamma x_n\rangle_Y\le \Psi(z_n,0)-\Psi(z_n,x_n).
	\end{align*}
By virtue of the use of hypotheses H($\mathscr L$), H($\Psi$)(ii) and H($\mathscr F$), we could find a bounded function $b_\Psi\colon X\times X\times X\to \mathbb R_+$ and $0<\eta<p$ such that  
\begin{eqnarray*}
		\Psi(z_n,0)-\Psi(z_n,x_n)\le b_\Psi(z_n,0,x_n)\|x_n\|_X^\eta,
	\end{eqnarray*}
	and 
	\begin{eqnarray*}
	\langle\mathscr L(x_n)+\mathscr F(x_n),x_n\rangle_X\ge c_\mathscr F\|x_n\|_X^p-d_\mathscr F.
	\end{eqnarray*}
	Hence, 
	\begin{align*}
		&	b_\Psi(z_n,0,x_n)\|x_n\|_X^\eta\\
		\ge &\Psi(z_n,0)-\Psi(z_n,x_n)\\
		\ge &	\langle\mathscr L(x_n)+\mathscr F(x_n)-\mathscr E,x_n\rangle_X+\langle\xi_n,\gamma x_n\rangle_Y\\
		\ge&c_\mathscr F\|x_n\|_X^p-d_\mathscr F-\left(\|\mathscr E\|_{X^*}+\|\gamma^*\xi_n\|_{X^*}\right)\|x_n\|_X.
	\end{align*}
Recall that $\{z_n\}$ is bounded in $X$ and $\|x_n\|_X\to \infty$, the identity holds (see hypothesis H($\Psi$)(ii)) 
 \begin{eqnarray*}
	\lim_{n\to +\infty}\frac{b_\Psi(z_n,0,x_n)}{\|x_n\|_X^{p-\eta}}=0.
\end{eqnarray*}
Therefore, we get 
\begin{align*}
	0=&\lim_{n\to\infty}	\frac{b_\Psi(z_n,0,x_n)}{\|x_n\|_X^{p-\eta}}\\
	\ge &c_\mathscr F-\lim_{n\to\infty}\frac{d_\mathscr F}{\|x_n\|_X^{p}}-\lim_{n\to\infty}\frac{\left(\|\mathscr E\|_{X^*}+\|\gamma^*\xi_n\|_{X^*}\right)}{\|x_n\|_X^{p-1}}\\
	=&c_\mathscr F>0.
	\end{align*}
	This leads to a contradiction. So, $\mathscr S$ is bounded from $\mathscr K \times Y^*$ to $\mathscr K$. 
	
We shall show that $\mathscr S$ maps bounded sets of $\mathscr K  \times Y^*$ to bounded sets of $\mathcal W$. By the definition of norm of $\mathcal W$, it is sufficient to verify that $\mathscr L(\mathscr S (\mathcal D))$ is bounded in $X^*$ for each bounded set $\mathcal D$ in $\mathscr K\times Y^*$. From the above analysis, it could find a constant $d_\mathcal D>0$ such that 
$$\|\mathscr L(\mathscr S(\mathcal D))\|_X:=\sup_{x\in \mathscr L(\mathscr S(\mathcal D))}\|x\|_X\le d_\mathcal D.$$  
Remembering  $0\in \mbox{int}\left(\cap_{w\in \mathscr K}\mathscr M(w)\right)$, so, we can pick up an open ball $O_X(0,d_0)$ with $d_0>0$ such that $O_X(0,d_0)\subset \mathscr M(z)$ for all $z\in \mathscr K$. 
Then, (\ref{eqns3.4}) implies 
	\begin{eqnarray*} 
	\langle\mathscr L(x),x-y\rangle\le\langle \mathscr F(x)-\mathscr E,y-x\rangle_X+\langle\xi,\gamma(y-x)\rangle_Y- \Psi(z,x)+\Psi(z,y) 
\end{eqnarray*} 	
for all $y\in O_X(0,d_0)$. Using the monotonicity of $\mathscr L$ and hypothesis H($\Psi$)(ii), one has 
	\begin{align*} 
	\langle\mathscr L(x),-y\rangle&\le\left[\|\mathscr F(x)\|_{X^*}+\|\mathscr E\|_{X^*}+\|\gamma^*\xi\|_{X^*}\right]\|y-x\|_X+b_\Psi(z,x,y)\|x-y\|_X \\
	\le& \left[\|\mathscr F(x)\|_{X^*}+\|\mathscr E\|_{X^*}+\|\gamma^*\xi\|_{X^*}\right](d_\mathcal D+d_0) +b_\Psi(z,x,y)(d_\mathcal D+d_0) \\
	\le& \left[d_\mathscr F+\|\mathscr E\|_{X^*}+\|\gamma^*\xi\|_{X^*}\right](d_\mathcal D+d_0) +l_\Psi(d_\mathcal D+d_0),
\end{align*}
where $d_\mathscr F:=\sup_{x\in O_X(0,d_\mathcal D)}\|\mathscr F(x)\|_{X^*}$ and $l_\Psi:=\sup_{z\in P_\mathscr K(\mathcal D),x\in O_X(0,d_\mathcal D),y\in O_X(0,d_0)}b_\Psi(z,x,y)$ and $P_\mathscr K$ is defined by $P_\mathscr K((x,\xi))=x$ for all $(x,\xi)\in \mathcal D$.
Therefore, we have 
\begin{align*}
 \|\mathscr Lx\|_{X^*}&=\sup_{y\in O_X(0,d_0)}\frac{1}{d_0}\langle\mathscr L(x),-y\rangle\\
 &\le\frac{1}{d_0}\left[d_\mathscr F+\|\mathscr E\|_{X^*}+\|\gamma^*\xi\|_{X^*}+d_\Psi\right](d_\mathcal D+d_0).
\end{align*}
This shows that $\mathscr S$ maps bounded sets of $\mathscr K \times Y^*$ to bounded sets of $\mathcal W$. 
	\end{proof}

Moreover, given a constant $r>0$, let us introduce the 
truncation mapping $\mathcal T_r\colon X\to X$ defined by 
\begin{eqnarray*}
	\mathcal T_r(x):=\left\{\begin{array}{ll}
x&\mbox{ if $\|x\|_X\le r$},\\
\frac{x}{\|x\|_X}&\mbox{ otherwise}.
		\end{array}\right.
\end{eqnarray*}

\begin{lemma}\label{lemmas3}
	Suppose all assumptions of Lemma~\ref{lemmas2} hold.
	Then there exists a constant $c_0>0$ such that 
	\begin{eqnarray*}
		\|x\|_X\le c_0
	\end{eqnarray*}
	for all $x\in \mathscr S (B_X(0,c_0),\mathscr G(\gamma \mathcal T_{c_1}(B_X(0,c_0))))$ for some $c_1\ge c_0$, where $B_X(0,c_0):=\{x\in X\,\mid\,\|x\|_X\le c_0\}$. 
\end{lemma}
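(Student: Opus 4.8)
The plan is to reduce the assertion to a single scalar coercivity inequality, obtained by testing Problem~\ref{problems2} with the admissible element $y=0$, and then to fix $c_0$ so large that the smallness condition~\eqref{smallnesscondition} forces every output of $\mathscr S$ back into $B_X(0,c_0)$. First I would use $c_1\ge c_0$: every $w\in B_X(0,c_0)$ satisfies $\|w\|_X\le c_0\le c_1$, so $\mathcal T_{c_1}$ restricts to the identity on $B_X(0,c_0)$ and $\gamma\mathcal T_{c_1}(B_X(0,c_0))=\gamma(B_X(0,c_0))$ is bounded in $Y$ by $\|\gamma\|c_0$. The growth condition~\eqref{Ggrowthcondition} in H($\mathscr G$) then bounds every admissible multiplier $\xi\in\mathscr G(\gamma\mathcal T_{c_1}(B_X(0,c_0)))$ by $\|\xi\|_{Y^*}\le c_\mathscr G\|\gamma\|^{p-1}c_0^{p-1}+d_\mathscr G$; this is where the constraint's dependence on $c_0$ materialises.

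Next, for $z\in B_X(0,c_0)\cap\mathscr K$ and such a $\xi$, set $x=\mathscr S(z,\xi)$. Since $0\in\mathscr M(z)\cap D(\mathscr L)$ (by H($\mathscr M$) and linearity of $\mathscr L$), I insert $y=0$ in~\eqref{eqns3.4}; invoking the monotonicity of $\mathscr L$ (so that $\langle\mathscr Lx,x\rangle_X\ge0$), the coercivity~\eqref{ineqF} of $\mathscr F$, the duality bounds for $\mathscr E$ and for $\gamma^*\xi$, and inequality~\eqref{inePsi} with $y_1=0$ and $y_2=x$, I obtain
\[
c_\mathscr F\|x\|_X^p\le d_\mathscr F+\bigl(\|\mathscr E\|_{X^*}+\|\gamma\|(c_\mathscr G\|\gamma\|^{p-1}c_0^{p-1}+d_\mathscr G)\bigr)\|x\|_X+b_\Psi(z,0,x)\|x\|_X^\eta .
\]

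The decisive step is to show this is incompatible with $\|x\|_X>c_0$ once $c_0$ is large. Assuming $\|x\|_X>c_0$, I replace $c_0^{p-1}$ by the larger $\|x\|_X^{p-1}$ in the critical term, turning $c_\mathscr G\|\gamma\|^pc_0^{p-1}\|x\|_X$ into $c_\mathscr G\|\gamma\|^p\|x\|_X^p$; absorbing it on the left gives
\[
(c_\mathscr F-c_\mathscr G\|\gamma\|^p)\|x\|_X^p\le d_\mathscr F+(\|\mathscr E\|_{X^*}+\|\gamma\|d_\mathscr G)\|x\|_X+b_\Psi(z,0,x)\|x\|_X^\eta ,
\]
whose leading coefficient $\delta:=c_\mathscr F-c_\mathscr G\|\gamma\|^p$ is strictly positive by~\eqref{smallnesscondition}. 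Since $\eta<p$ and, by H($\Psi$)(ii), $b_\Psi(z,0,x)/\|x\|_X^{p-\eta}\to0$ uniformly for $z$ in the bounded set $B_X(0,c_0)$, the right-hand side is $o(\|x\|_X^p)$, so $\delta\|x\|_X^p$ dominates for $\|x\|_X$ past some threshold, a contradiction. Taking $c_0$ at least as large as this threshold (and $c_1=c_0$) excludes $\|x\|_X>c_0$ and delivers the claimed bound.

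I expect the principal obstacle to be the self-referential character of the estimate: the admissible bound on $\xi$, and hence the whole right-hand side, depends on the very radius $c_0$ one is fixing, while the control of $b_\Psi$ is only uniform over bounded $z$-sets. The smallness condition~\eqref{smallnesscondition} is exactly what closes the loop, making the effective growth coefficient of $\|x\|_X$ strictly smaller than that of the forcing term (heuristically a factor $(c_\mathscr G\|\gamma\|^p/c_\mathscr F)^{1/(p-1)}<1$); the remaining care lies in checking that the threshold generated by the sublinear $b_\Psi$-term can be taken below $c_0$, which rests on $\eta<p$ together with the boundedness of $\mathscr S$ on bounded sets established in Lemma~\ref{lemmas2}.
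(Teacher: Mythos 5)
Your overall skeleton coincides with the paper's: test \eqref{eqns3.4} with $y=0$ (admissible since $0\in\mathscr M(z)\cap D(\mathscr L)$), use the monotonicity of $\mathscr L$, the coercivity \eqref{ineqF}, and the growth bound \eqref{Ggrowthcondition}, and let the smallness condition \eqref{smallnesscondition} close the self-referential loop created by the $c_\mathscr G\|\gamma\|^{p}$-term. Your absorption trick --- replacing $c_0^{p-1}$ by $\|x\|_X^{p-1}$ when $\|x\|_X>c_0$ and moving $c_\mathscr G\|\gamma\|^p\|x\|_X^p$ to the left --- is a clean alternative to the paper's route, which instead keeps $c_2$ as an unknown, derives $\|x\|_X^{p-1}\le\bigl(\|\mathscr E\|_{X^*}+\|\gamma\|c_2+\cdots+c_3\bigr)/m_0$ via Young's inequality, and then solves the resulting linear self-consistency inequality for $c_2$ (solvable precisely because $c_\mathscr G\|\gamma\|^p/m_0<1$ under \eqref{smallnesscondition}).

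There is, however, a genuine gap in how you control the $\Psi$-term. You estimate $\Psi(z,0)-\Psi(z,x)\le b_\Psi(z,0,x)\|x\|_X^\eta$ via \eqref{inePsi} and then invoke the decay in H($\Psi$)(ii), which is uniform only over each \emph{fixed} bounded set of $z$'s. Here $z$ ranges over $B_X(0,c_0)$, the very ball whose radius you are trying to determine, so the threshold $R$ beyond which $b_\Psi(z,0,x)\|x\|_X^\eta\le\tfrac{\delta}{2}\|x\|_X^p$ holds depends on $c_0$, and nothing in H($\Psi$)(ii) guarantees $R(c_0)\le c_0$ for some $c_0$: for instance $b_\Psi(z,0,x)=\|z\|_X^2\,\|x\|_X^{p-\eta}/(1+\log(1+\|x\|_X))$ satisfies the required uniform decay on every bounded set yet produces a threshold growing exponentially in $c_0$. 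You flag this as ``the remaining care'', but neither $\eta<p$ nor Lemma~\ref{lemmas2} (which is purely qualitative) closes that loop. The paper avoids the issue by using H($\Psi$)(iv) at exactly this point: $\Psi(z,x)\ge-c_\Psi\|x\|_X^\beta-d_\Psi$ together with $|\Psi(z,0)|\le e_\Psi$ gives $\Psi(z,0)-\Psi(z,x)\le c_\Psi\|x\|_X^\beta+d_\Psi+e_\Psi$ with constants \emph{independent of $z$} and a fixed exponent $\beta<p$, so the dominance threshold is absolute. Replacing your use of \eqref{inePsi} by this estimate repairs the argument; the rest of your proof then goes through.
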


\begin{proof}
Let $c_0>0$ be fixed which will be determinated latter. For every $x,z\in B_X(0,c_0)$ and $\xi\in \mathscr G(\gamma T_{c_1}(z))$, we use hypothesis H($\mathcal G$) to attain 
\begin{eqnarray}\label{eqns3.6}
		\|\xi\|_{Y^*}\le c_\mathscr G\|\gamma\mathcal T_{c_1}(z)\|_Y^{p-1}+d_\mathscr G\le  c_\mathscr G\|\gamma\|^{p-1} \|\mathcal T_{c_1}(z)\|_X^{p-1}+d_\mathscr G.
\end{eqnarray}
From the above inequalities, if we suppose that $\|\xi\|_{Y^*}\le c_2$ for all $\xi \in \mathscr G(\gamma T_{c_1}(z))$ and all $z\in B_X(c_0,0)$ and set $x=\mathscr S(z,\xi)$, then it could take $y=0$ in (\ref{eqns3.4}) and use hypothesis H($\Psi$)(iv) to having 
	\begin{align*}
& \left(\|\mathscr E\|_{X^*}+\|\gamma\|c_2\right)\|x\|_X\\
\ge 	& \left(\|\mathscr E\|_{X^*}+\|\gamma^*\xi\|_{X^*}\right)\|x\|_X\\
	\ge &-\langle\xi,\gamma x\rangle_Y-\langle\mathscr E,x\rangle_X\\
	\ge &	\langle\mathscr L(x)+\mathscr F(x) ,x\rangle_X-\Psi(z,0)+\Psi(z,x)\\
	\ge&c_\mathscr F\|x\|_X^p-d_\mathscr F-\Psi(z,0)+\Psi(z,x)\\
	\ge &c_\mathscr F\|x\|_X^p-d_\mathscr F-c_\Psi\|x\|_X^\beta-d_\Psi-e_\Psi,
\end{align*}
namely,
 	\begin{align}\label{est1}
 	& \left(\|\mathscr E\|_{X^*}+\|\gamma\|c_2\right)\|x\|_X +c_\Psi\|x\|_X^{\beta}+d_\mathscr F+d_\Psi+e_\Psi\ge c_\mathscr F\|x\|_X^{p}.
 	\end{align}
 	
 	Let us consider two cases that 
 	\begin{itemize}
 		\item[$\bullet$] If it is true that $\|\mathscr S(z,\xi)\|_X\le 1$ for all $(z,\xi)\in X\times Y^*$, then we can take directly $c_0=c_1=1$. Also, we set $c_2= c_\mathscr G\|\gamma\|^{p-1} +d_\mathscr G.$
 		
 	\item[$\bullet$] When $(z,\xi)\in \mathcal D$ is such that $\|\mathscr S(z,\xi)\|_X>1$. 
 	Keeping in mind that $c_\mathscr F>c_\mathscr G\|\gamma\|^p$ (see (\ref{smallnesscondition})) and (\ref{est1}), we deduce 
 	\begin{align*} 
 		c_\mathscr F\|x\|_X^{p}\le& \left(\|\mathscr E\|_{X^*}+\|\gamma\|c_2\right)\|x\|_X +c_\Psi\|x\|_X^{\beta}+d_\mathscr F+d_\Psi+e_\Psi \\
 		\le&\left(\|\mathscr E\|_{X^*}+\|\gamma\|c_2+d_\mathscr F+d_\Psi+e_\Psi \right)\|x\|_X +c_\Psi\|x\|_X^{\beta}.
 	\end{align*}
 	It could employ Young inequality to find a constant $c_3>0$ such that 
 	\begin{eqnarray}\label{eqns3.8}
 		\frac{ \left(\|\mathscr E\|_{X^*}+\|\gamma\|c_2+d_\mathscr F+d_\Psi+e_\Psi\right) +c_3}{m_0}\ge \|x\|_X^{p-1} 
 		\end{eqnarray}
 		with $m_0:=\frac{c_\mathscr F+c_\mathscr G\|\gamma\|^p}{2}$, where $c_3$ only relies  on  $m_0$. 
 		
 		Next, we are going to verify that the constant $c_0$ could be taken by 
 		\begin{eqnarray}\label{eqns3.9}
 				c_0:=	\left(\frac{ \left(\|\mathscr E\|_{X^*}+\|\gamma\|c_2+d_\mathscr F+d_\Psi+e_\Psi\right) +c_3}{m_0}\right)^\frac{1}{p-1}.
 		\end{eqnarray}
 	So, we have to calculate $c_2$ in advance. We take $c_1=c_0$. Indeed, from (\ref{eqns3.8}) and (\ref{eqns3.6}), we have 
 	\begin{align*}
 		 c_\mathscr G\|\gamma\|^{p-1} \|\mathcal T_{c_1}(z)\|_Y^{p-1}+d_\mathscr G&\le c_\mathscr G\|\gamma\|^{p-1} c_0^{p-1}+d_\mathscr G\\
 		 &\le	\frac{  c_\mathscr G\|\gamma\|^{p-1} \left(\|\mathscr E\|_{X^*}+\|\gamma\|c_2+d_\mathscr F+d_\Psi+e_\Psi\right) +c_3}{m_0}+d_\mathscr G.
 		\end{align*}
It is not hard to prove that for each 
$c_2\ge c_4$ with 
$$
c_4:=\frac{ c_\mathscr G\|\gamma\|^{p-1}(\|\mathscr E\|_{X^*}+d_\mathscr F+d_\Psi+e_\Psi)+c_3+m_0d_\mathscr G}{m_1}
$$
with 
$m_1:=\frac{c_\mathscr F-c_\mathscr G}{2}$, the inequality holds
$$
\frac{  c_\mathscr G\|\gamma\|^{p-1} \left(\|\mathscr E\|_{X^*}+\|\gamma\|c_2+d_\mathscr F+d_\Psi+e_\Psi\right) +c_3}{m_0}+d_\mathscr G\le c_2.
$$
 	Without loss of generality, we take $c_2=c_4$. 	Therefore, $c_0$ is well-defined.  
 	\end{itemize}
 This completes the proof of the lemma. 
\end{proof}

Observe that if $(x,\xi)\in\mathscr K\in Y^*$ is a fixed point of $\mathcal Q\colon \mathscr K\times Y^*\to 2^{\mathscr K\times Y^*}$ defined by 
$$
\mathcal Q(z,\xi):=\left(\mathscr S(z,\xi),\mathscr G(\gamma z)\right),
$$
then it solve Problem~\ref{problems1} as well. Following this important property, we are going to show that $\mathcal Q$ has a fixed point in $\mathscr K\times Y^*$.  

Indeed, from the proof of Lemma~\ref{lemmas3}, we have the following lemma. 

\begin{lemma}\label{lemmas4}
Under the assumptions of  Lemma~\ref{lemmas2} hold, the inclusion is available 
$$
\mathcal Q(\mathscr D)\subset \mathscr D,
$$
where $\mathscr D=\mathcal D_1\times \mathcal D_2$ and $\mathcal D_1$ and $\mathcal D_2$ are defined by 
$$
\mathcal D_1:=\{x\in \mathscr K\,\mid\,\|x\|_X\le c_0\mbox{ and $\|\mathscr Lx\|_{X^*}\le c_5$}\},
$$
and $\mathcal D_2=B_{Y^*}(0,c_2)$, where $c_0$ and $c_2$ are given in the proof of Lemma~\ref{lemmas3}, and $c_5$ is defined by  
$$
c_5:=\frac{1}{d_0}\left[d_\mathscr F+\|\mathscr E\|_{X^*}+\|\gamma\|c_2+l_\Psi\right](c_0+d_0),
$$
here $d_\mathscr F:=\sup_{x\in \mathcal D_3}\|\mathscr F(x)\|_{X^*}$, $l_\Psi:=\sup_{z\in \mathcal D_3,x\in \mathcal D_3,y\in O_X(0,d_0)}b_\Psi(z,x,y)$ with $\mathcal D_3:=\{x\in X\,\mid\,\|x\|_X\le c_0\}$, and $d_0>0$ is such that $O_X(0,d_0)\subset \mathscr M(z)$ for all $z\in \mathscr K$.
\end{lemma}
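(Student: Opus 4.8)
The plan is to read off both components of the inclusion $\mathcal Q(\mathscr D)\subset\mathscr D$ directly from the estimates already produced in the proofs of Lemma~\ref{lemmas2} and Lemma~\ref{lemmas3}, rather than re-deriving anything from scratch. Since $\mathcal Q(z,\xi)=(\mathscr S(z,\xi),\mathscr G(\gamma z))$ and $\mathscr D=\mathcal D_1\times\mathcal D_2$, it suffices to verify, for every $(z,\xi)\in\mathcal D_1\times\mathcal D_2$, the two separate memberships $\mathscr S(z,\xi)\in\mathcal D_1$ and $\mathscr G(\gamma z)\subset\mathcal D_2$. The guiding observation, which is what closes the circular dependence among the constants, is that on $\mathcal D_1$ one has $\|z\|_X\le c_0=c_1$, so the truncation acts as the identity there, $\mathcal T_{c_1}(z)=z$; consequently $\gamma z=\gamma\mathcal T_{c_1}(z)$, and every bound of Lemma~\ref{lemmas3} derived for the truncated argument applies verbatim to $z$ itself.

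For the second component I would fix $(z,\xi)\in\mathcal D_1\times\mathcal D_2$ and estimate $\mathscr G(\gamma z)$. By the growth condition~(\ref{Ggrowthcondition}) in H($\mathscr G$), together with $\|\gamma z\|_Y\le\|\gamma\|\,\|z\|_X\le\|\gamma\|c_0$, every $\eta\in\mathscr G(\gamma z)$ satisfies $\|\eta\|_{Y^*}\le c_\mathscr G\|\gamma\|^{p-1}c_0^{p-1}+d_\mathscr G$. The constant $c_2=c_4$ was selected in the proof of Lemma~\ref{lemmas3} precisely so that this quantity does not exceed $c_2$; hence $\mathscr G(\gamma z)\subset B_{Y^*}(0,c_2)=\mathcal D_2$.

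For the first component, set $x=\mathscr S(z,\xi)$. The norm bound $\|x\|_X\le c_0$ is obtained by revisiting inequality~(\ref{est1}): the test choice $y=0$ in~(\ref{eqns3.4}), the coercivity~(\ref{ineqF}), and the lower bound in H($\Psi$)(iv) yield~(\ref{est1}), in which the dependence on $\xi$ enters only through $\|\gamma^*\xi\|_{X^*}\le\|\gamma\|c_2$. Since $\xi\in\mathcal D_2$ gives $\|\xi\|_{Y^*}\le c_2$, the very same computation together with the definition~(\ref{eqns3.9}) of $c_0$ forces $\|x\|_X\le c_0$. To bound $\|\mathscr Lx\|_{X^*}$ I would reproduce the argument of Lemma~\ref{lemmas2}: choosing $d_0>0$ with $O_X(0,d_0)\subset\mathscr M(z)$ (possible since $0\in\mbox{int}(\cap_{w\in\mathscr K}\mathscr M(w))$), inserting test functions $y\in O_X(0,d_0)$ into~(\ref{eqns3.4}), and using the monotonicity of $\mathscr L$, H($\Psi$)(ii), the boundedness of $\mathscr F$ on $B_X(0,c_0)$, and the bounds $\|x\|_X\le c_0$, $\|\xi\|_{Y^*}\le c_2$. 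The resulting estimate is exactly $\|\mathscr Lx\|_{X^*}\le\frac{1}{d_0}[d_\mathscr F+\|\mathscr E\|_{X^*}+\|\gamma\|c_2+l_\Psi](c_0+d_0)=c_5$, so $x\in\mathcal D_1$. Combining the two parts delivers $\mathcal Q(z,\xi)\in\mathcal D_1\times\mathcal D_2$.

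The step I expect to require the most care is not any single inequality but the bookkeeping of the constants: one must check that the bound on $\mathscr G$ feeding $c_2$, the bound~(\ref{eqns3.9}) on $\mathscr S$ giving $c_0$, and the graph-norm bound giving $c_5$ are mutually consistent, i.e.\ that the loop $z\in\mathcal D_1\Rightarrow\mathscr G(\gamma z)\subset\mathcal D_2\Rightarrow\mathscr S(z,\xi)\in\mathcal D_1$ genuinely closes. This is guaranteed exactly because $c_1=c_0$ makes $\mathcal T_{c_1}$ trivial on $\mathcal D_1$, so no enlargement of the radius occurs when passing through $\mathscr G$; the self-referential choice $c_2=c_4$ in the proof of Lemma~\ref{lemmas3} is what makes this fixed radius admissible, and the additional coordinate bound $c_5$ only records the elliptic estimate for $\mathscr L x$ at the radius $c_0$ already fixed.
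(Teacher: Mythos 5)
Your proposal is correct and follows essentially the same route as the paper: the bounds $\|\mathscr S(z,\xi)\|_X\le c_0$ and $\mathscr G(\gamma z)\subset B_{Y^*}(0,c_2)$ are read off from the estimates of Lemma~\ref{lemmas3}, and the graph-norm bound $\|\mathscr L(\mathscr S(z,\xi))\|_{X^*}\le c_5$ is obtained by repeating the test-function argument of Lemma~\ref{lemmas2} on $O_X(0,d_0)$. Your explicit remark that the estimate (\ref{est1}) depends on $\xi$ only through $\|\xi\|_{Y^*}\le c_2$ (so that it applies to the whole ball $\mathcal D_2$, not merely to values of $\mathscr G$) is in fact slightly more careful than the paper's one-line citation of Lemma~\ref{lemmas3}.
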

\begin{proof}
From Lemma~\ref{lemmas3}, we can observe that for any $(z,\xi)\in \mathscr D$ it holds $\|\mathscr S(z,\xi)\|_X\le c_0$ and $\|\mathscr G(\gamma z)\|_{Y}\le c_2$. So, it remains us to verify that 
\begin{eqnarray}
	\|\mathscr L(\mathscr S(z,\xi))\|_{X^*}\le c_5. 
\end{eqnarray}

Because  $O_X(0,d_0)\subset \mathscr M(z)$ for all $z\in \mathscr K$. Then, we infer (more details, see the proof of Lemma~\ref{lemmas2})
\begin{align*}
	\|\mathscr Lx\|_{X^*}&=\sup_{y\in O_X(0,d_0)}\frac{1}{d_0}\langle\mathscr L(x),-y\rangle\\
	&\le\frac{1}{d_0}\left[d_\mathscr F+\|\mathscr E\|_{X^*}+\|\gamma^*\xi\|_{X^*}+l_\Psi\right](c_0 +d_0)\\
	&\le\frac{1}{d_0}\left[d_\mathscr F+\|\mathscr E\|_{X^*}+\|\gamma^*\xi\|_{X^*}+l_\Psi\right](c_0+d_0)\\
	&\le \frac{1}{d_0}\left[d_\mathscr F+\|\mathscr E\|_{X^*}+\|\gamma\|c_2+l_\Psi\right](c_0+d_0)\\
	&=c_5.
\end{align*}
The above estimates indicate that $\mathcal Q$ maps $\mathscr D$ into itself. 
\end{proof}

Based on Lemma~\ref{lemmas4}, to prove the emptiness of solution set of Problem~\ref{problems1} is sufficient to illustrate that multivalued mapping $\mathcal Q$ has a fixed point in $\mathscr D$. We are now in a position to provide the detailed proof of Theorem~\ref{theorems1}. 

\begin{proof}[Proof of Theorem~\ref{theorems1}.] The proof is divided into four steps.   

\noindent
{\bf Step 1.} {\it Problem~\ref{problems1} has at least one solution, when $\mathscr F$ satisfies strictly monotone.}

To this end, we shall employ Theorem~\ref{KlugeFPT} to conclude that $\mathcal Q\colon \mathscr D\to 2^{\mathscr D}$ has a fixed point in $\mathscr D$. In fact, we have 
\begin{itemize}
	\item[$\bullet$] From hypotheses H($\mathscr G$), we can observe that $\mathcal Q$ has nonempty, bounded, closed and convex values in $\mathcal W\times Y^*$. 
	\item[$\bullet$] Let $\{(z_n,\xi_n)\},\{x_n,\eta_n\}\subset \mathscr D$ be such that $x_n=\mathscr S(z_n,\xi_n)$, $\eta_n\in \mathscr  G(\gamma z_n)$, $(z_n,\xi_n)\wto (z,\xi)$ and $(x_n,\eta_n)\wto (x,\eta)$ in $\mathcal W\times Y^*$ for some $(z,\xi),(x,\eta)\in \mathscr D$ (because of the closedness and convexity of $\mathscr D$). 
	So, for every $n\in\mathbb N$, it yields
		\begin{eqnarray}\label{eqns3.10}
		\langle\mathscr L(x_n)+\mathscr F(x_n)-\mathscr E,y-x_n\rangle_X+\langle\xi_n,\gamma(y-x_n)\rangle_Y\ge \Psi(z_n,x_n)-\Psi(z_n,y) 
	\end{eqnarray} 	
	for all $y\in \mathscr M(z_n)\cap D(\mathscr L)$. 
		Let $y\in \mathscr M(z)$ be fixed. 
		Applying hypothesis H($\mathscr M$)(ii), it allows us to find a sequence $\{y_n\}\subset D(\mathscr L)$ such that $y_n\in \mathscr M(z_n)$ and $y_n\to y$ in $X$. Putting $y=y_n$ in (\ref{eqns3.10}), we can utilize the monotonicity of $\mathscr L$ and $\mathscr F$ to attain 
		\begin{align*}
			&\langle\mathscr L(y)+\mathscr F(y)-\mathscr E,x_n-y_n\rangle_X\\
				\le &\langle\mathscr L(y)+\mathscr F(y)-\mathscr E,x_n-y_n\rangle_X+\langle\mathscr L(x_n)+\mathscr F(x_n)-\mathscr E,y_n-x_n\rangle_X\\
				&+\langle\xi_n,\gamma(y_n-x_n)\rangle_Y-\Psi(z_n,x_n)+\Psi(z_n,y_n) \\
				=&\langle \mathscr L(x_n)+\mathscr F(x_n)-\mathscr L(y)-\mathscr F(y),y-x_n\rangle_X+\langle\xi_n,\gamma(y_n-x_n)\rangle_Y\\
				&+\langle \mathscr L(x_n)+\mathscr F(x_n)-\mathscr L(y)-\mathscr F(y),y_n-y\rangle_X-\Psi(z_n,x_n)+\Psi(z_n,y_n) \\
				\le&\langle \mathscr L(x_n)+\mathscr F(x_n)-\mathscr L(y)-\mathscr F(y),y_n-y\rangle_X+\langle\xi_n,\gamma(y_n-x_n)\rangle_Y\\
				&-\Psi(z_n,x_n)+\Psi(z_n,y_n).
			\end{align*}
			Passing to the upper limit for the inequality above, it yields 
			\begin{align*}
				&\langle\mathscr L(y)+\mathscr F(y)-\mathscr E,x-y\rangle_X\\
					=&\lim_{n\to\infty}\langle\mathscr L(y)+\mathscr F(y)-\mathscr E,x_n-y_n\rangle_X\\
				\le 	&\limsup_{n\to\infty}\bigg[\langle  \mathscr L(x_n)+\mathscr F(x_n)-\mathscr L(y)-\mathscr F(y),y_n-y\rangle_X+\langle\xi_n,\gamma(y_n-x_n)\rangle_Y\\
						&-\Psi(z_n,x_n)+\Psi(z_n,y_n)\bigg]\\
						=& \lim_{n\to\infty} \langle  \mathscr L(x_n)+\mathscr F(x_n)-\mathscr L(y)-\mathscr F(y),y_n-y\rangle_X+\lim_{n\to\infty}\langle\xi_n,\gamma(y_n-x_n)\rangle_Y\\
						&+\limsup_{n\to\infty}\left[-\Psi(z_n,x_n)+\Psi(z_n,y_n)\right]\\
						\le& \langle\xi,\gamma(y-x)\rangle_Y-\Psi(z,x)+\Psi(z,y),
				\end{align*}
				here it has used hypotheses H($\Psi$)(iii) and the compactness of $\gamma$ on $\mathcal W$. Then, it is available that 
				\begin{eqnarray*}
					\langle\mathscr L(y)+\mathscr F(y)-\mathscr E,y-x\rangle_X+\langle\xi,\gamma(y-x)\rangle_Y-\Psi(z,x)+\Psi(z,y)\ge 0
				\end{eqnarray*}
				 for all $y\in \mathscr M(z)$. Keeping in mind that $x\in \mathscr M(z)$ (owing to hypothesis H($\mathscr M$)(i)), from the hemicontinuity of $\mathscr F$ and $\mathscr L$ in $D(\mathscr L)\cap \mathscr M(z)$, and the convexity of $\Psi(z,\cdot)$, it is not hard to use Minty approach for getting that $x\in \mathscr M(z)$ solves the inequality 
				 \begin{eqnarray*}
				 	\langle\mathscr L(x)+\mathscr F(x)-\mathscr E,y-x\rangle_X+\langle\xi,\gamma(y-x)\rangle_Y-\Psi(z,x)+\Psi(z,y)\ge 0
				 \end{eqnarray*}
				for all $y\in \mathscr M(z)$. This indicates that $x=\mathscr S(z,\xi)$. 
				On the other hand, because $\mathscr G\colon Y\to 2^{Y^*}$ has a sequentially strongly-weakly closed  graph and $\gamma$ is compactness on $\mathcal W$, so, it is obvious that $\eta\in \mathscr G(\gamma z)$. This means that $(x,\eta)\in (\mathscr S(z,\xi),\mathscr G(\gamma z))=\mathcal Q(z,\xi)$, namely, the graph Gr($\mathcal Q$) of $\mathcal Q$ is weakly closed in $\mathcal W\times Y^*$. 
\end{itemize}

Under the analysis above, it allows us to invoke Kluge's fixed point theorem, Theorem~\ref{KlugeFPT}, for obtaining that $\mathscr Q$ has at least one fixed point $(x,\xi)\in\mathscr D$. Also, $x$ is a solution of Problem~\ref{problems1}. 

\smallskip
\noindent
{\bf Step 2.} {\it The solution set of Problem~\ref{problems1} is nonempty without the assumption that $\mathscr F$ is strictly monotone.}

In Step 1, we use the strict monotonicity of $\mathscr F$ to show the existence of solutions of Problem~\ref{problems1}. In fact, this assumption guarantees that $\mathscr S$ is singleton. Next, we will apply approximating method to remove this assumption.  
Let $\{\varepsilon_n\}\subset(0,+\infty)$ be such that $\varepsilon_n\to 0$ as $n\to\infty$. We consider the following perturbated inequality: find $x_n\in \mathscr M(x_n)\cap D(\mathscr L)$ and $\xi_n\in \mathscr G(\gamma x_n)$ such that 
\begin{eqnarray}\label{eqns3.13s}
	\langle\mathscr L(x_n)+\mathscr F(x_n)+\varepsilon_n \mathcal J(x_n)-\mathscr E,y-x_n\rangle_X+\langle\xi,\gamma(y-x_n)\rangle_Y\ge \Psi(x_n,x_n)-\Psi(x_n,y) 
\end{eqnarray} 	
for all $y\in \mathscr M(x_n)\cap D(\mathscr L)$, where $\mathcal J\colon X\to X^*$ is the normalized duality map on $X$. Let $x_n\in \mathscr M(x_n)\cap D(\mathscr L)$ be a solution of inequality (\ref{eqns3.13s}). We assert that $\{x_n\}$ is bounded in $\mathcal W$. Putting $y=0$ in (\ref{eqns3.13s}), it utilizes the positivity of $\langle\mathcal J(x_n),x_n\rangle$ and hypotheses H($\mathscr F$), H($\Psi$) and H($\mathscr G$) to having  
	\begin{align*}
	& \left(\|\mathscr E\|_{X^*}+\|\gamma\|(c_\mathscr G\|\gamma\|^{p-1}\|x_n\|_X^{p-1}+d_\mathscr G)\right)\|x_n\|_X\\
	\ge 	& \left(\|\mathscr E\|_{X^*}+\|\gamma^*\xi_n\|_{X^*}\right)\|x_n\|_X\\
	\ge &-\langle\xi,\gamma x_n\rangle_Y-\langle\mathscr E,x_n\rangle_X\\
	\ge &	\langle\mathscr L(x_n)+\mathscr F(x_n)+\varepsilon_n\mathcal J(x_n) ,x_n\rangle_X-\Psi(x_n,0)+\Psi(x_n,x_n)\\
	\ge &c_\mathscr F\|x\|_X^p-d_\mathscr F-c_\Psi\|x\|_X^\beta-d_\Psi-e_\Phi.
\end{align*}
Whereas, the smallness condition (\ref{smallnesscondition}) reveals that $\{x_n\}$ is bounded in $X$. Employing the same arguments as in the proof of Lemma~\ref{lemmas2}, it shows that $\{\mathscr Lx_n\}$ is bounded in $X^*$. The assertion has been proved. Therefore, it could say that 
\begin{eqnarray*}
	x_n\wto x\mbox{ in $X$ and $\mathscr Lx_n\wto \mathscr Lx$ in $X^*$}. 
\end{eqnarray*}
For every $y\in\mathscr M(x)$ fixed, it could find a sequence $\{y_n\}$ with $y_n\in \mathscr M(x_n)$ and $y_n\to y$ in $X$. As in (\ref{eqns3.13s}), we take $y=y_n$ to find  
\begin{align*}
	&\langle\mathscr L(y)+\mathscr F(y)+\varepsilon_n\mathcal J(y)-\mathscr E,x_n-y_n\rangle_X\\
	\le&\langle \mathscr L(x_n)+\mathscr F(x_n)+\varepsilon_n\mathcal J(x_n)-\mathscr L(y)-\mathscr F(y),y_n-y\rangle_X+\langle\xi_n,\gamma(y_n-x_n)\rangle_Y\\
	&-\Psi(x_n,x_n)+\Psi(x_n,y_n).
\end{align*}
Taking the upper limit as $n\to\infty$ for the inequality above and using Minty technique, it concludes that $x\in \mathscr M(x)\cap D(\mathscr L)$ is a solution of Problem~\ref{problems1}.

\smallskip
\noindent
{\bf Step 3.} {\it The solution set of Problem~\ref{problems1} is weakly compact in $\mathcal W$.} 

Let $\{x_n\}$ be a solution sequence of Problem~\ref{problems1}. Inserting $y=0$  
in (\ref{eqns3.1}) with $x=x_n$ and $\xi=\xi_n\in\mathscr G(\gamma x_n)$, we have 
	\begin{align*}
	& \left(\|\mathscr E\|_{X^*}+\|\gamma\|(c_\mathscr G\|\gamma\|^{p-1} \|x_n\|_X^{p-1}+d_\mathscr G)\right)\|x_n\|_X\\
	\ge 	& \left(\|\mathscr E\|_{X^*}+\|\gamma^*\xi_n\|_{X^*}\right)\|x_n\|_X\\
	\ge &-\langle\xi_n,\gamma x_n\rangle_Y\\
	\ge &	\langle\mathscr L(x_n)+\mathscr F(x_n) ,x_n\rangle_X-\Psi(x_n,0)+\Psi(x_n,x_n)\\
	\ge&c_\mathscr F\|x_n\|_X^p-d_\mathscr F-\Psi(x_n,0)+\Psi(x_n,x_n)\\
	\ge &c_\mathscr F\|x_n\|_X^p-d_\mathscr F-c_\Psi\|x_n\|_X^\beta-d_\Psi-e_\Psi,
\end{align*} 
that is, 
\begin{align*}
\left(\|\mathscr E\|_{X^*}+\|\gamma\|d_\mathscr G\right)\|x_n\|_X	+c_\Psi\|x_n\|_X^\beta+d_\mathscr F+d_\Psi+e_\Psi\ge &\left(c_\mathscr F-c_\mathscr G\|\gamma\|^p\right)\|x_n\|_X^p.
\end{align*}
This indicates that $\{x_n\}$ is bounded in $X$, due to $0< \beta<p$ and $c_\mathscr F-c_\mathscr G\|\gamma\|^p>0$. As before we did in Lemma~\ref{lemmas4}, it could find 
	\begin{align*}
	\langle\mathscr L(x_n),-z\rangle
	\le &\langle\mathscr F(x_n)-\mathscr E,z-x_n\rangle_X+\langle\xi_n,\gamma(z-x_n)\rangle_Y- b_\Psi(x_n,x_n,z)\|x_n-z\|_X^\eta
\end{align*} 	
for all $z\in O_X(0,d_0)$. It can verify that $\{\mathscr Lx_n\}$ is bounded in $X^*$. So, $\{x_n\}$ is bounded in $\mathcal W$. Without loss of generality, we may assume that
\begin{eqnarray}\label{eqns3.11}
	x_n\wto x\mbox{ in $\mathcal W$}. 
\end{eqnarray}
The boundedness of $\mathscr G$ permits us to suppose that $\xi_n\wto \xi$ in $Y^*$ for some $\xi\in  Y^*$. 
But, from the strong-weak closedness of $\mathscr G$ and compactness of $\gamma$ on $\mathcal W$, we infer $\xi\in \mathscr G(\gamma x)$. Remembering that $\mathcal Q$ is weakly closed. So, $\mathscr S$ is weakly closed in $\mathcal W\times Y^*$ as well. This implies that $x_n=\mathscr S(x_n,\xi_n)\wto \mathscr S(x,\xi)=x$ in $\mathcal W$ with $\xi\in \mathscr G(\gamma x)$.  
It shows that the solution set of Problem~\ref{problems1} is weakly compact in $\mathcal W$. 

\smallskip
\noindent
{\bf Step 4.} {\it When $\mathscr F$ satisfies $(S_+)$-property with respect to $\mathscr L$, then the solution set of Problem~\ref{problems1} is compact in $X$.}

From the proof of Step 3, we can see that $x_n\wto x$ in $\mathcal W$. Condition H($\mathscr M$)(ii) guarantees the existence of a sequence $\{z_n\}$ with $z_n\in \mathscr M(x_n)$ and $z_n\to x$ in $X$. Letting $y=z_n$ in (\ref{eqns3.1}) with  $x=x_n$ and $\xi=\xi_n\in\mathscr G(\gamma x_n)$, we deduce 
	\begin{align*}
	&\langle\mathscr L(x)-\mathscr E,x_n-z_n\rangle_X+\langle\mathscr F x_n,x_n-z_n\rangle\\
\le&\langle \mathscr L(x_n)-\mathscr L(x),x_n-x\rangle_X+\mathscr L(x_n)-\mathscr L(x),x-z_n\rangle_X+\langle\xi_n,\gamma(y_n-x_n)\rangle_Y\\
&
-\Psi(x_n,x_n)+\Psi(x_n,y_n)\\
\le&\mathscr L(x_n)-\mathscr L(x),x-z_n\rangle_X+\langle\xi_n,\gamma(y_n-x_n)\rangle_Y
-\Psi(x_n,x_n)+\Psi(x_n,y_n).
\end{align*}
Taking upper limit as $n\to\infty$, it derives 
\begin{eqnarray*}
	\limsup_{n\to\infty}\langle\mathscr F x_n,x_n-x\rangle\le 0.
\end{eqnarray*}
Then, $(S_+)$-property   of $\mathscr F$ leads to $x_n\to x$ in $X$.  
\end{proof}

In the end of this section, let us discuss several particular cases of Problem~\ref{problems1} and its corresponding existence and compactness results by using Theorem~\ref{theorems1}. 

Let $\mathscr J\colon Y\to \mathbb R$ be a locally Lipschitz function such that there are  constants $c_\mathscr J>0$, $d_\mathscr J\ge 0$ and $0<\theta\le p-1$ satisfying 
\begin{eqnarray}\label{eqns3.12}
	\|\eta\|_{Y*}\le c_\mathscr J\|y\|_Y^{\theta}+d_\mathscr J
\end{eqnarray}
for all $\eta\in\partial_C \mathscr J(y)$ and all $y\in Y$, where $\partial_C\mathscr J$ is the generalized Clarke subdifferential of $\mathscr J$. When $\mathscr G$ is formulated by $\mathscr G(y)=\partial_C\mathscr J(y)$ for all $y\in Y$, then Problem~\ref{problems1} becomes to the following one:
\begin{problem}\label{problems3}
	Given an element $\mathscr E\in X^*$, find $x\in \mathscr M(x)\cap D(\mathscr L)$ and $\xi\in \partial_C \mathscr J(\gamma x)$ such that 
	\begin{eqnarray}\label{eqns3.13}
		\langle\mathscr L(x)+\mathscr F(x)-\mathscr E,y-x\rangle_X+\langle\xi,\gamma(y-x)\rangle_Y\ge \Psi(x,x)-\Psi(x,y) 
	\end{eqnarray} 	
	for all $y\in \mathscr M(x)\cap D(\mathscr L)$.
\end{problem}
Then, employing Theorem~\ref{theorems1}, we have the following existence theorem for Problem~\ref{problems3}. 

\begin{theorem}\label{theorems2}
	Suppose that  H($\mathscr L$), H($\mathscr K$), H($\mathscr F$),  H($\mathscr E$), H($\gamma$), H($\Psi$) and H($\mathscr M$) are fulfilled. Also, we assume that $\mathscr J\colon Y\to \mathbb R$ is a locally Lipschitz function such that growth condition (\ref{eqns3.12}) is fulfilled,  and the inequality holds
	\begin{eqnarray*}
		c_\mathscr F>c_\mathscr J\|\gamma\|^p
	\end{eqnarray*}
	when $\theta=p-1$. Then, the solution set of Problem~\ref{problems3} is nonempty and weakly compact in $\mathcal W$. Moreover, if $\mathscr F$ satisfies $(S_+)$-property with respect to $\mathscr L$, then the solution set of Problem~\ref{problems3} is compact in $X$ as well.
\end{theorem}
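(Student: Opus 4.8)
The plan is to obtain Theorem~\ref{theorems2} as a direct consequence of the general result Theorem~\ref{theorems1}, applied with the particular choice $\mathscr G=\partial_C\mathscr J$. Since all of H($\mathscr K$), H($\mathscr L$), H($\mathscr F$), H($\mathscr E$), H($\gamma$), H($\Psi$) and H($\mathscr M$) are assumed here verbatim, the only thing that needs to be checked is that the multivalued map $Y\ni y\mapsto\partial_C\mathscr J(y)\subset Y^*$ fulfills hypothesis H($\mathscr G$), together with the matching of growth constants that turns the standing assumption into the smallness condition (\ref{smallnesscondition}). Once this is done, Theorem~\ref{theorems1} delivers nonemptiness and weak compactness in $\mathcal W$, and, under the $(S_+)$-property, compactness in $X$.

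First I would dispose of the structural requirements on the values. Because $\mathscr J\colon Y\to\mathbb R$ is locally Lipschitz, the Clarke calculus (\cite{Clarke}) ensures that $\partial_C\mathscr J(y)$ is a nonempty, convex and weak$^*$-compact subset of $Y^*$ for each $y$; in particular it is nonempty, closed and convex, which is the first clause of H($\mathscr G$). For the sequentially strongly-weakly closed graph I would argue directly from Proposition~\ref{subdiff}. Take $y_n\to y$ in $Y$ and $\eta_n\in\partial_C\mathscr J(y_n)$ with $\eta_n\weak\eta$ in $Y^*$ (reflexivity of $Y$ makes the weak and weak$^*$ topologies on $Y^*$ coincide). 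From the definition of the subgradient, $\langle\eta_n,v\rangle_{Y^*\times Y}\le\mathscr J^0(y_n;v)$ for every $v\in Y$. Passing to the limit and using that $\langle\eta_n,v\rangle\to\langle\eta,v\rangle$ together with the upper semicontinuity of $(x,v)\mapsto\mathscr J^0(x;v)$ from Proposition~\ref{subdiff}(ii), I would get $\langle\eta,v\rangle\le\limsup_{n\to\infty}\mathscr J^0(y_n;v)\le\mathscr J^0(y;v)$ for all $v\in Y$, hence $\eta\in\partial_C\mathscr J(y)$, which is exactly the required closedness.

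It then remains to recast the growth bound (\ref{eqns3.12}) into the form (\ref{Ggrowthcondition}). When $\theta=p-1$ this is immediate with $c_\mathscr G=c_\mathscr J$ and $d_\mathscr G=d_\mathscr J$, and the assumed inequality $c_\mathscr F>c_\mathscr J\|\gamma\|^p$ is precisely (\ref{smallnesscondition}). In the subcritical case $0<\theta<p-1$ I would invoke Young's inequality to write $c_\mathscr J\|y\|_Y^\theta\le\varepsilon\|y\|_Y^{p-1}+C(\varepsilon)$ for an arbitrary $\varepsilon>0$, so that (\ref{Ggrowthcondition}) holds with $c_\mathscr G=\varepsilon$ and $d_\mathscr G=C(\varepsilon)+d_\mathscr J$; choosing $\varepsilon<c_\mathscr F/\|\gamma\|^p$ forces $c_\mathscr F>c_\mathscr G\|\gamma\|^p$ automatically, which explains why no smallness hypothesis is imposed when $\theta<p-1$. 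With H($\mathscr G$) fully verified, Theorem~\ref{theorems1} applies and yields the conclusion.

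The only genuinely nontrivial step will be the graph closedness, where the strong convergence of the base points $y_n$ and the weak (equivalently weak$^*$, by reflexivity) convergence of the subgradients $\eta_n$ must be reconciled through the upper semicontinuity of the generalized directional derivative; the growth bookkeeping, although it requires the Young's inequality device in the subcritical regime, is routine.
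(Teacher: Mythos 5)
Your proposal is correct and follows essentially the same route as the paper: reduce to verifying H($\mathscr G$) for $\partial_C\mathscr J$, obtain the strongly--weakly closed graph from Proposition~\ref{subdiff} (you simply spell out the upper-semicontinuity argument that the paper cites implicitly), and absorb the subcritical growth exponent via Young's inequality so that the smallness condition (\ref{smallnesscondition}) holds automatically when $\theta<p-1$. No gaps; your write-up is just a more detailed version of the paper's proof.
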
 
\begin{proof}
	It is sufficient to show that $\partial_C \mathscr J\colon Y\to 2^{Y^*}$ enjoys all properties of H($\mathscr G$). From Proposition~\ref{subdiff}, we can see that $\partial_C \mathscr J$ has a strongly-weakly closed graph with nonempty, closed, convex and bounded values. If $\theta=p-1$, then growth condition (\ref{eqns3.12}) is a direct consequence of (\ref{Ggrowthcondition}). When $\theta<p-1$, then it follows from Young inequality that 
	\begin{eqnarray*}
		\|\eta\|_{Y^*}\le c_\mathscr  J\|y\|_Y^\theta+d_\mathscr J\le \frac{c_\mathscr F}{2\|\gamma\|^p}\|y\|_Y^{p-1}+e_\mathscr J\mbox{ for all $\eta\in \partial_C\mathscr  J(y)$ and all $y\in Y$}
	\end{eqnarray*}
	for some $e_\mathscr J>0$ which is independent of $\eta$ and $y$. 
\end{proof}

It can observe that if $x\in \mathscr K$ is a solution of Problem~\ref{problems3}, then there exists $\xi\in \partial_C\mathscr J(\gamma x)$ such that (\ref{eqns3.13}) is available. Whereas, by the definition of Clarke subgradient (see Definition~\ref{SUB}), it derives 
	\begin{eqnarray*}
	\langle\mathscr L(x)+\mathscr F(x)-\mathscr E,y-x\rangle_X+\mathscr J^0(\gamma x;\gamma(y-x))\ge \Psi(x,x)-\Psi(x,y) 
\end{eqnarray*} 	
for all $y\in \mathscr M(x)\cap D(\mathscr L)$, namely,  
the solution set of Problem~\ref{problems3} is a subset of the following evolution quasi-variational-hemivariational inequality:
\begin{problem}\label{problems4}
	Given an element $\mathscr E\in X^*$, find $x\in \mathscr M(x)\cap D(\mathscr L)$   such that 
	\begin{eqnarray}\label{eqns3.14}
		\langle\mathscr L(x)+\mathscr F(x)-\mathscr E,y-x\rangle_X+\mathscr J^0(\gamma x;\gamma(y-x))\ge \Psi(x,x)-\Psi(x,y) 
	\end{eqnarray} 	
	for all $y\in \mathscr M(x)\cap D(\mathscr L)$.
\end{problem}
So, we have the following corollary. 
\begin{corollary}\label{corollarys1}
Under the assumptions of Theorem~\ref{theorems2}, the solution set of Problem~\ref{problems4} is nonempty and weakly compact in $\mathcal W$. Moreover, if $\mathscr F$ satisfies $(S_+)$-property with respect to $\mathscr L$, then the solution set of Problem~\ref{problems4} is compact in $X$.
\end{corollary}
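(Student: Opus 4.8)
The plan is to deduce the statement from the proof of Theorem~\ref{theorems1}, replacing the multivalued pairing $\langle\xi,\gamma(\cdot)\rangle_Y$ by the Clarke directional derivative $\mathscr J^0(\gamma x;\gamma(\cdot))$ and invoking the properties of $\mathscr J^0$ collected in Proposition~\ref{subdiff}. Nonemptiness is immediate: by the observation preceding the statement, every solution of Problem~\ref{problems3} solves Problem~\ref{problems4}, and Theorem~\ref{theorems2} guarantees that the solution set of Problem~\ref{problems3} is nonempty. Hence it remains to prove that the \emph{whole} solution set of Problem~\ref{problems4} is bounded and weakly closed in $\mathcal W$, and then to run the $(S_+)$ upgrade.

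For the a priori bound, I would let $x$ be any solution of~(\ref{eqns3.14}) and test with $y=0$, which is admissible since $0\in\mbox{int}(\cap_{w\in\mathscr K}\mathscr M(w))\cap D(\mathscr L)$. By Proposition~\ref{subdiff}(i), $\mathscr J^0(\gamma x;-\gamma x)=\max_{\eta\in\partial_C\mathscr J(\gamma x)}\langle\eta,-\gamma x\rangle_Y$, so the growth condition~(\ref{eqns3.12}) gives $\mathscr J^0(\gamma x;-\gamma x)\le(c_\mathscr J\|\gamma\|^{\theta}\|x\|_X^{\theta}+d_\mathscr J)\|\gamma\|\|x\|_X$. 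Combining this with the coercivity of $\mathscr F$ (H($\mathscr F$)) and the lower bound on $\Psi$ (H($\Psi$)(iv)), exactly as in Step~3 of the proof of Theorem~\ref{theorems1}, yields boundedness of $x$ in $X$; the case $\theta<p-1$ is absorbed by Young's inequality, whereas the borderline case $\theta=p-1$ is closed precisely by the smallness hypothesis $c_\mathscr F>c_\mathscr J\|\gamma\|^p$. Testing with elements of a ball $O_X(0,d_0)\subset\mathscr M(x)$ and using the monotonicity of $\mathscr L$ then bounds $\mathscr Lx$ in $X^*$, as in Lemma~\ref{lemmas2} and Lemma~\ref{lemmas4}, so the solution set is bounded in $\mathcal W$; by reflexivity of $\mathcal W$ every sequence of solutions admits a weakly convergent subsequence.

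For weak closedness, let $\{x_n\}$ be solutions of~(\ref{eqns3.14}) with $x_n\wto x$ in $\mathcal W$. Since $\gamma$ is compact on $\mathcal W$ (H($\gamma$)), $\gamma x_n\to\gamma x$ in $Y$, and H($\mathscr M$) yields $x\in\mathscr M(x)\cap D(\mathscr L)$. Fix $y\in\mathscr M(x)\cap D(\mathscr L)$ and choose $y_n\in\mathscr M(x_n)\cap D(\mathscr L)$ with $y_n\to y$ in $X$ (H($\mathscr M$)(ii)), so that $\gamma(y_n-x_n)\to\gamma(y-x)$ in $Y$. Testing~(\ref{eqns3.14}) for $x_n$ at $y=y_n$, moving $\mathscr L(x_n)+\mathscr F(x_n)$ onto the test point $y$ by monotonicity as in Step~1 of the proof of Theorem~\ref{theorems1}, and passing to the upper limit, the only new ingredient is the nonsmooth term: by the upper semicontinuity of $(u,v)\mapsto\mathscr J^0(u;v)$ (Proposition~\ref{subdiff}(ii)) together with the strong convergence of both arguments, $\limsup_{n\to\infty}\mathscr J^0(\gamma x_n;\gamma(y_n-x_n))\le\mathscr J^0(\gamma x;\gamma(y-x))$, while H($\Psi$)(iii) controls the bifunction. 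This produces the Minty form with the operator evaluated at $y$. I would then run the Minty argument: replace $y$ by $y_t:=x+t(y-x)\in\mathscr M(x)\cap D(\mathscr L)$ (convexity of $\mathscr M(x)$, linearity of $\mathscr L$), exploit the positive homogeneity $\mathscr J^0(\gamma x;t\gamma(y-x))=t\mathscr J^0(\gamma x;\gamma(y-x))$ and the convexity of $\Psi(x,\cdot)$, divide by $t$ and send $t\to0^+$ using the hemicontinuity of $\mathscr F$ and the linearity of $\mathscr L$; this recovers~(\ref{eqns3.14}) for $x$, so $x$ is a solution and the solution set is weakly compact in $\mathcal W$.

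The $(S_+)$ upgrade is then identical to Step~4 of the proof of Theorem~\ref{theorems1}: testing at $z_n\in\mathscr M(x_n)$ with $z_n\to x$ in $X$, the monotonicity of $\mathscr L$ and the boundedness of $\mathscr J^0(\gamma x_n;\gamma(z_n-x_n))$ (again via Proposition~\ref{subdiff}(i) and~(\ref{eqns3.12})) give $\limsup_{n\to\infty}\langle\mathscr F(x_n),x_n-x\rangle_X\le0$, whence the $(S_+)$-property forces $x_n\to x$ in $X$ and the solution set is compact in $X$. The main obstacle is the passage to the limit and the Minty step for the directional-derivative term, where linearity is lost: the elementary manipulations valid for $\langle\xi,\gamma(\cdot)\rangle_Y$ must be replaced by the sublinearity, positive homogeneity, and upper semicontinuity of $\mathscr J^0$, and one must keep careful track of the borderline exponent $\theta=p-1$, where the smallness condition is indispensable.
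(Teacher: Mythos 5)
Your proposal is correct and follows essentially the same route as the paper's proof: nonemptiness via the inclusion of the solution set of Problem~\ref{problems3} into that of Problem~\ref{problems4} (Theorem~\ref{theorems2}), a priori bounds via the max formula $\mathscr J^0(\gamma x;-\gamma x)=\langle\xi_x,-\gamma x\rangle_Y$ for some $\xi_x\in\partial_C\mathscr J(\gamma x)$ combined with the growth condition, and weak closedness plus the $(S_+)$ upgrade via the upper semicontinuity of $(u,v)\mapsto\mathscr J^0(u;v)$ fed into the arguments of Theorem~\ref{theorems1}. The paper states this only in outline, whereas you supply the details (including the Minty step and the treatment of the borderline exponent $\theta=p-1$), but no new idea is involved.
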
 

\begin{proof}
The existence part is a consequence of Theorem~\ref{theorems2}. The weak compactness of solution set of  Problem~\ref{problems4} in $\mathcal W$ could be proved by using the same arguments as in the proof of Theorem~\ref{theorems1} and the fact that for each $x\in \mathscr K$ there exist $\xi_x\in \partial_C \mathscr J(\gamma x)$ such that 
\begin{eqnarray*}
	\mathscr J^0(\gamma x;-\gamma(x))=\langle\xi,-\gamma x \rangle_Y.
\end{eqnarray*}
Likewise, it can also apply the similar way to show that the solution set of Problem~\ref{problems4} is compact in $X$ via employing the upper semicontinuity of $Y\times Y\ni (x,y)\mapsto\mathscr J^0(x;y)\in\mathbb R$, if  $\mathscr F$ satisfies $(S_+)$-property with respect to $\mathscr L$. 
\end{proof}

On the other hand, let us consider the special case of $\Psi$ that $\Psi$ is independent of the first variable. In such situation, Problem~\ref{problems1} becomes to the following one: 
\begin{problem}\label{problems5}
	Given an element $\mathscr E\in X^*$, find $x\in \mathscr M(x)\cap D(\mathscr L)$ and $\xi\in \mathscr G(\gamma x)$ such that 
	\begin{eqnarray}\label{eqns3.13bis}
		\langle\mathscr L(x)+\mathscr F(x)-\mathscr E,y-x\rangle_X+\langle\xi,\gamma(y-x)\rangle_Y\ge \Psi(x)-\Psi(y) 
	\end{eqnarray} 	
	for all $y\in \mathscr M(x)\cap D(\mathscr L)$.
\end{problem}
We also have the following existence result for Problem~\ref{problems5} in which the inequality $\Psi(z,x)\ge -c_\Psi\|x\|_X^\beta-d_\Psi$  for all $z,x\in X$, could be removed. 
\begin{theorem}\label{theorems4}
Under the assumptions of  H($\mathscr K$), H($\mathscr L$), H($\mathscr F$), H($\mathscr G$), H($\gamma$), H($\mathscr E$)  and H($\mathscr M$), if the inequality (\ref{smallnesscondition}) holds and  $\Psi\colon X\to \mathbb R$ is a convex and lower semicontinuous function,
then the solution set of Problem~\ref{problems5} is nonempty and weakly compact in $\mathcal W$.
Moreover, if $\mathscr F$ satisfies $(S_+)$-property with respect to $D(\mathscr L)$, then the solution set of Problem~\ref{problems5} is compact in $X$.
\end{theorem}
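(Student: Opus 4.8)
The plan is to mirror the four-step proof of Theorem~\ref{theorems1}, exploiting the fact that for a convex, lower semicontinuous and real-valued $\Psi\colon X\to\mathbb R$ the full hypothesis H($\Psi$) is unnecessary: its relevant consequences hold automatically, and the missing lower bound is supplied by an affine minorant. First I would record the elementary facts that a finite convex lsc function on a reflexive Banach space is continuous (hence locally Lipschitz and bounded on bounded sets) and weakly lower semicontinuous, and that it admits a global affine minorant: there is $\zeta_0\in\partial\Psi(0)$ with $\Psi(x)\ge\langle\zeta_0,x\rangle_X+\Psi(0)$ for all $x\in X$. The decisive point is that $\zeta_0$ is a \emph{fixed} functional, independent of any parameter, which is precisely what replaces the uniform-in-$z$ growth control of the general framework.

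Next I would re-run the parametrized problem (the analogue of Problem~\ref{problems2} with $\Psi(z,\cdot)$ replaced by $\Psi$) and Lemma~\ref{lemmas1}. That argument only used that $\varphi=\Psi+I_{\mathscr M(z)}$ is proper, convex and lsc together with the existence of an affine minorant, so it transfers verbatim and Proposition~\ref{Proexistence} again produces the selection $\mathscr S$ (single-valued once strict monotonicity is assumed). In the boundedness lemmas I would replace each occurrence of $\Psi(z,0)-\Psi(z,x)$ by $\Psi(0)-\Psi(x)$ and bound it by the minorant, $\Psi(0)-\Psi(x)\le-\langle\zeta_0,x\rangle_X\le\|\zeta_0\|_{X^*}\|x\|_X$; this is a \emph{linear} term, subcritical since $p>1$, so the coercivity computation of Lemma~\ref{lemmas3} survives and yields the same a~priori radius $c_0$. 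For the bound on $\mathscr L x$, with $x$ ranging over a bounded set and the test element $y$ over the fixed ball $O_X(0,d_0)$, the quantity $-\Psi(x)+\Psi(y)$ is controlled by a constant (continuity of $\Psi$ and the minorant), so the estimate of Lemma~\ref{lemmas2} and the self-map property $\mathcal Q(\mathscr D)\subset\mathscr D$ of Lemma~\ref{lemmas4} remain valid.

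I would then repeat Steps~1--4 of the proof of Theorem~\ref{theorems1}. The only place that invoked H($\Psi$)(iii) is the limit passage $\limsup_{n\to\infty}\big(\Psi(y_n)-\Psi(x_n)\big)\le\Psi(y)-\Psi(x)$, which now follows from continuity of $\Psi$ along the strongly convergent $y_n\to y$ and from weak lower semicontinuity along $x_n\wto x$; hence Kluge's fixed point theorem, Theorem~\ref{KlugeFPT}, applies and gives a solution under strict monotonicity. The approximation argument of Step~2 (adding $\varepsilon_n\mathcal J$) then removes strict monotonicity unchanged, the weak compactness in $\mathcal W$ of Step~3 uses the same a~priori bound together with weak closedness of the selection, and the $(S_+)$ argument of Step~4 upgrades this to compactness in $X$; in each case Minty's trick uses only convexity of $\Psi$.

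The main obstacle is purely a matter of bookkeeping: one must verify that every appeal to H($\Psi$)(ii)--(iv) inside Lemmas~\ref{lemmas1}--\ref{lemmas4} can be discharged through the affine minorant and continuity of $\Psi$, and in particular that the linear lower bound $-\|\zeta_0\|_{X^*}\|x\|_X$ is dominated by the $p$-coercivity $c_\mathscr F\|x\|_X^p$. Because $p>1$, this domination is automatic; consequently no growth restriction on $\Psi$ is needed and the hypothesis $\Psi(z,x)\ge-c_\Psi\|x\|_X^\beta-d_\Psi$ can indeed be dropped, as claimed.
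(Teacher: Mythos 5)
Your proposal is correct and follows essentially the same route as the paper: the paper's proof likewise replaces the missing $\beta$-growth lower bound by an affine minorant $\Psi(x)\ge-\alpha_\Psi\|x\|_X-\beta_\Psi$ of the convex lower semicontinuous function, observes that this linear term is dominated by the $p$-coercivity of $\mathscr F$ since $p>1$, and then reruns the arguments of Lemma~\ref{lemmas3} and Theorem~\ref{theorems1}. The only caveat is your parenthetical claim that a finite convex lsc function on a Banach space is bounded on bounded sets, which is false in infinite dimensions; this is harmless here because the estimates only require $\Psi$ to be bounded above on a small ball about the origin (shrink $d_0$ if necessary), which continuity at $0$ provides, together with the affine minorant for the lower bound.
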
 
\begin{proof}
	From the proof of Theorem~\ref{theorems1}, it can observe that the essential meaning of the inequality 
	\begin{eqnarray*}
		\mbox{$\Psi(z,x)\ge -c_\Psi\|x\|_X^\beta-d_\Psi$ for all $x\in X$}
		\end{eqnarray*}
	is to guarantee that the conclusion of Lemma~\ref{lemmas3} is valid. 
	In fact, when $\Psi$ is independent of its first variable (it could have infinite values), we have 
		\begin{align*}
		& \left(\|\mathscr E\|_{X^*}+\|\gamma\|\|\xi\|_{Y^*}\right)\|x\|_X\\
		\ge 	& \left(\|\mathscr E\|_{X^*}+\|\gamma^*\xi\|_{X^*}\right)\|x\|_X\\
		\ge &-\langle\xi,\gamma x\rangle_Y-\langle\mathscr E,x\rangle_X\\
		\ge &	\langle\mathscr L(x)+\mathscr F(x) ,x\rangle_X-\Psi(0)+\Psi(x)\\
		\ge&c_\mathscr F\|x\|_X^p-d_\mathscr F-\Psi(0)+\Psi(x)\\
		\ge &c_\mathscr F\|x\|_X^p-d_\mathscr F-\alpha_\Psi\|x\|_X-\beta_\Psi-\Psi(0),
	\end{align*}
	where $\alpha_\Psi,\beta_\Psi\ge 0$ are such that 
	\begin{eqnarray*}
		\Psi(x)\ge -\alpha_\Psi\|x\|_X-\beta_\Psi\mbox{ for all $x\in X$}. 
	\end{eqnarray*}
	Hence,
	\begin{align}\label{est1bis}
		& \left(\|\mathscr E\|_{X^*}+\|\gamma\|\|\xi\|_{Y^*}+\alpha_\Psi\right)\|x\|_X +\beta_\Psi+d_\mathscr F+\Psi(0)\ge c_\mathscr F\|x\|_X^{p}.
	\end{align}
	Therefore, we could apply the same arguments as in the proof of Lemma~\ref{lemmas3} to obtain the desired conclusion. 
	\end{proof}

However, when $\Psi$ has infinity values, then we have to strengthen the assumptions of $\mathscr M$  to guarantee the existence of solutions for Problem~\ref{problems5}. 
\begin{enumerate}
	\item[\textnormal{H($\mathscr M$)':}] $\mathscr M\colon \mathscr K\to 2^{\mathscr K}$ has nonempty, closed and convex values such that  $0\in \mbox{int}\left(\cap_{w\in \mathscr K}\mathscr M(w)\right)$ and 
	\begin{enumerate}
		\item[(i)] $x\in \mathscr M(y)\cap D(\mathscr L)$ holds, whenever $\{y_n\},\{x_n\}\subset \mathscr K$ fulfill $x_n\in \mathscr M(y_n)\cap D(\mathscr L)$, $y_n\wto y$   in $\mathcal W$ and $x_n\wto x$ in $\mathcal W$; 
		\item[(ii)] if $\{y_n\}\subset \mathscr K\cap D(\mathscr L)$ converges weakly to $y$ (i.e., $y_n\wto y$)  in $\mathcal W$, then for each $x\in \mathscr M(y)\cap D(\mathscr L)$ we could find two sequences $\{y_{n_k}\}\subset\{y_n\}$ and $\{x_k\}\subset X$ with $x_k\in \mathscr M(y_{n_k})\cap D(\mathscr L)$ such that $x_k\to x$ in $X$ and $\Psi(x_k)\to \Psi(x)$ as $k\to\infty$.
	\end{enumerate}
\item[\textnormal{H($\Psi$)':}] $\Psi\colon X\to\overline{\mathbb R}$ is a proper, convex and lower semicontinuous function such that $0\in \mbox{int}(D(\partial \Psi))$. \end{enumerate}

Applying the same arguments as in the proof of Theorem~\ref{theorems1}, we have the following theorem for Problem~\ref{problems5} with $\Psi\colon X\to\overline {\mathbb R}$. 

\begin{theorem}\label{theorems5}
Under the assumptions of  H($\mathscr K$), H($\mathscr L$), H($\mathscr F$), H($\mathscr G$), H($\Psi$)', H($\gamma$), H($\mathscr E$)  and H($\mathscr M$)', if the inequality (\ref{smallnesscondition}) holds, 
	then the solution set of Problem~\ref{problems5} is nonempty and weakly compact in $\mathcal W$. Moreover, if $\mathscr F$ satisfies $(S_+)$-property with respect to $\mathscr L$, then the solution set of Problem~\ref{problems5} is compact in $X$.
\end{theorem}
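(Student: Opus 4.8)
The plan is to reproduce the fixed point scheme of the proof of Theorem~\ref{theorems1}, adjusting only those places where the admissibility of infinite values for $\Psi$ actually intervenes. As before, for each parameter pair $(z,\xi)\in\mathscr K\times Y^*$ one first solves the auxiliary inequality obtained from (\ref{eqns3.13bis}) by freezing the constraint at $\mathscr M(z)$, which is equivalent to the inclusion $\mathscr Lx+\mathscr F(x)+\partial\varphi(x)\ni\mathscr E-\gamma^*\xi$ with $\varphi:=\Psi+I_{\mathscr M(z)}$. Since H($\Psi$)' guarantees that $\varphi$ is proper, convex and lower semicontinuous, Proposition~\ref{Proexistence} applies: $\mathscr L+\mathscr F$ is maximal monotone as in Lemma~\ref{lemmas1}, the interior condition of Proposition~\ref{Proexistence}(iii) is verified as there with $0\in\mathrm{int}(D(\partial\Psi))$ now playing the role of $0\in D(\partial\Psi(z,\cdot))$, and coercivity follows after replacing the bound of H($\Psi$)(iv) by a continuous affine minorant of the proper convex lower semicontinuous $\Psi$. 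This yields the selection $\mathscr S(z,\xi)$, single-valued when $\mathscr F$ is strictly monotone.

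Next I would establish the a priori estimate that localizes the fixed point argument. Following Theorem~\ref{theorems4}, testing (\ref{eqns3.13bis}) with $y=0$ and using the affine minorant $\Psi(x)\ge-\alpha_\Psi\|x\|_X-\beta_\Psi$ gives precisely inequality (\ref{est1bis}); coupling it with the growth bound (\ref{Ggrowthcondition}) for $\xi\in\mathscr G(\gamma\mathcal T_{c_1}(z))$ and the smallness condition (\ref{smallnesscondition}), the truncation device of Lemma~\ref{lemmas3} produces a radius $c_0$ with $\|\mathscr S(z,\xi)\|_X\le c_0$ on the relevant ball. The boundedness of $\mathscr L\mathscr S(z,\xi)$ in $X^*$ is obtained as in Lemma~\ref{lemmas2} using $O_X(0,d_0)\subset\mathscr M(z)$, so the set-valued map $\mathcal Q(z,\xi):=(\mathscr S(z,\xi),\mathscr G(\gamma z))$ carries the bounded closed convex set $\mathscr D=\mathcal D_1\times\mathcal D_2$ of Lemma~\ref{lemmas4} into itself and has nonempty closed convex values in $\mathcal W\times Y^*$.

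The heart of the argument, and the step I expect to be the main obstacle, is the weak closedness of $\mathrm{Gr}(\mathcal Q)$ required by Kluge's theorem, Theorem~\ref{KlugeFPT}. Given $(z_n,\xi_n)\weak(z,\xi)$ and $x_n=\mathscr S(z_n,\xi_n)\weak x$ in $\mathcal W$, one fixes $y\in\mathscr M(z)\cap D(\mathscr L)$ and, via H($\mathscr M$)'(ii), chooses a recovery sequence $y_k\in\mathscr M(z_{n_k})\cap D(\mathscr L)$ with $y_k\to y$ in $X$ \emph{and} $\Psi(y_k)\to\Psi(y)$; testing with $y_k$, applying monotonicity of $\mathscr L+\mathscr F$ in Minty form, the compactness of $\gamma$ on $\mathcal W$, and passing to the upper limit, the only delicate terms are $-\Psi(x_{n_k})+\Psi(y_k)$. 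Here the finite-valued estimate of H($\Psi$)(iii) is unavailable, so I would instead combine the lower semicontinuity of $\Psi$ (which gives $\liminf_k\Psi(x_{n_k})\ge\Psi(x)$) with the convergence $\Psi(y_k)\to\Psi(y)$ furnished by the recovery condition; together they yield $\limsup_k(-\Psi(x_{n_k})+\Psi(y_k))\le-\Psi(x)+\Psi(y)$, which is exactly the sign needed. A standard Minty argument on $\mathscr M(z)$ (convexity of $\mathscr M(z)$, hemicontinuity of $\mathscr F$, linearity and monotonicity of $\mathscr L$) then recovers $x=\mathscr S(z,\xi)$, while the strong--weak closedness of $\mathscr G$ and compactness of $\gamma$ give $\xi\in\mathscr G(\gamma z)$, whence $(x,\xi)\in\mathcal Q(z,\xi)$; and $x\in\mathscr M(z)$ by H($\mathscr M$)'(i). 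Theorem~\ref{KlugeFPT} then furnishes a fixed point, i.e.\ a solution of Problem~\ref{problems5}, under strict monotonicity of $\mathscr F$.

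To remove strict monotonicity I would run the perturbation $\mathscr F+\varepsilon_n\mathcal J$ with $\varepsilon_n\to0^+$ exactly as in Step~2 of Theorem~\ref{theorems1}, noting that the uniform bound on $\{x_n\}$ again follows from (\ref{est1bis}) and (\ref{smallnesscondition}), and that the passage to the limit reuses the recovery-sequence control of the $\Psi$ terms just described. Weak compactness in $\mathcal W$ then follows from the same a priori bound together with the weak closedness of $\mathrm{Gr}(\mathcal Q)$ already proved, since every solution sequence is a fixed-point sequence and its weak limits remain solutions. Finally, under the $(S_+)$-property of $\mathscr F$ relative to $\mathscr L$, I would apply H($\mathscr M$)'(ii) to the weakly convergent solution sequence $x_n\weak x$ and the element $x\in\mathscr M(x)\cap D(\mathscr L)$ to obtain (along a subsequence) a recovery $z_k\to x$ with $z_k\in\mathscr M(x_{n_k})$ and $\Psi(z_k)\to\Psi(x)$, and use lower semicontinuity of $\Psi$ once more to reach $\limsup_n\langle\mathscr F(x_n),x_n-x\rangle_X\le0$, whence $x_n\to x$ in $X$ and the solution set is compact in $X$.
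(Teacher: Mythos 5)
Your proposal is correct and follows exactly the route the paper intends: the paper gives no separate proof of this theorem, stating only that one applies ``the same arguments as in the proof of Theorem~\ref{theorems1}'', and your reconstruction supplies precisely the right adaptations --- the affine minorant of the proper convex lower semicontinuous $\Psi$ for the a priori bound (as in Theorem~\ref{theorems4}), and the replacement of H($\Psi$)(iii) by the combination of weak lower semicontinuity of $\Psi$ with the recovery condition $\Psi(x_k)\to\Psi(x)$ from H($\mathscr M$)$'$(ii) in the Minty/limsup passage. Nothing further is needed.
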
 

\begin{remark}
	 The existence of solutions to Problem~\ref{problems5} was obtained  by Khan-Mig\'orski-Zeng~\cite{Khan-Migorski-Zeng-Optimization2024} under the strict assumptions, for example,  $\gamma \colon X\to Y$ is compact, $p=2$, $\Psi(0)=0$ and $\Psi(x)\ge -c_\Psi\|x\|_X$ for all $x\in X$. In fact, there are plenty of problems which can not satisfy these assumptions, for example, when $X=L^p(0,T;L^p(\Omega))$ and $Y=L^p(\Omega\times(0,T))$, then the operator $\gamma $ is not compact on $X$.  However, in this paper, we remove these strict requirements. This extends the scopes of applications of evolution multivalued quasi-variational inequalities.  
\end{remark}
 
 \section{Nonlienar and nonsmooth optimal control problems}\label{Section4}
 
 This section is devoted to develop a general framework for the study of a nonlinear and nonsmooth optimal control problem governed by evolution multivalued quasi-variational inequality, Problem~\ref{problems1}. More exactly, we are going to find the optimal control triple $\in \Pi$, $l\in \Theta$ and $\mathscr E\in  X^*$ such that a solution of evolution multivalued quasi-variational inequality associated with $(e,l,\mathscr E)$:
 	find $x\in \mathscr M(x)\cap D(\mathscr L)$ and $\xi\in \mathscr G(l,\gamma x)$ such that 
 	\begin{eqnarray}\label{eqns4.1}
 		\langle\mathscr L(x)+\mathscr F(e,x)-\mathscr E,y-x\rangle_X+\langle\xi,\gamma(y-x)\rangle_Y\ge \Psi(x,x)-\Psi(x,y) \mbox{ for all $y\in \mathscr M(x)\cap D(\mathscr L)$},
 	\end{eqnarray} 	
 approaches sufficiently to the known data which are measured in advance. Whereas, from the language of optimal control, we are interesting in the research of the following nonlinear and nonsmooth optimization problem:
    \begin{problem}\label{problems6}
    	Find optimal control pairs $e^*\in \Pi$, $l^*\in \Theta$ and $\mathscr E^*\in \Sigma$ such that the inequality  holds
    	\begin{eqnarray}\label{eqns4.0}
    		\mathcal C(e^*,l^*,\mathscr E^*)\le \mathcal C(e,l,\mathscr E)\mbox{ for all $(e,l,\mathscr E)\in \Pi\times\Theta\times\Sigma$},
    	\end{eqnarray}
    	where the cost functional $\mathcal C\colon \Pi\times\Theta\times \Sigma\to \mathbb R$ is defined by 
    	\begin{eqnarray*}
    		\mathcal C(e,l,\mathscr E):=\inf_{x\in S(e,l,\mathscr E)}h(x,e,l,\mathscr E),
    	\end{eqnarray*}
    	 $h\colon X\times \Pi\times \Theta\times\Sigma\to \mathbb R$ is a given function and $S(e,l,\mathscr E)$ is the solution set of evolution multivalued  quasi-variational inequality (\ref{eqns4.1}) corresponding to control variables (or unknown parameters) $(e,l,\mathscr E)$, $\Pi$, $\Theta$ and $\Sigma$ are the admissible sets for control variables (or unknown parameters) $e$, $l$ and $\mathscr E$, respectively.  
    	\end{problem}
    
    In order to establish the existence of optimal control variables (or parameters) $(e^*,l^*,\mathscr E^*)$, we impose the following assumptions on the data of Problem~\ref{problems6}:
    \begin{enumerate}
    \item[\textnormal{H($\Pi$):}] $\Pi_1$ is a Banach space such that $\Pi$ is weakly$^*$ closed in $\Pi_1$.
    \item[\textnormal{H($\Sigma$):}] $Z$ is a reflexive Banach space with compact embedding $Z\subset X^*$, and $\Sigma\subset Z$ is a weakly closed set. 
     \item[\textnormal{H($\Theta$):}] $\Theta_1$ is a Banach space such that $\Theta$ is weakly$^*$ closed in $\Theta_1$.
    \item[\textnormal{H($h$):}] $h\colon X\times\Theta\times \Pi\times \Sigma\to \mathbb R$ is a weakly lower semicontinuous function and satisfies the following inequality 
    \begin{eqnarray*}
    	h(x,e,l,\mathscr E)\ge r(e,l,\mathscr E)\mbox{ for all $(x,e,l,\mathscr E)\in X\times \Pi\times \Sigma$},
    \end{eqnarray*}
    where $r\colon \Pi_1\times\Theta_1 \times Z \to \mathbb R$ is bounded from below, and coercive on $\Pi_1\times \Theta_1\times Z$, namely, 
    \begin{eqnarray*}
    	r(e,l,\mathscr E)\to +\infty\mbox{ as $\|e\|_{\Pi_1}+\|l\|_{\Theta_1}+\|\mathscr E\|_{Z}\to \infty$}.
    \end{eqnarray*}
    \item[\textnormal{H($\mathscr F$)':}] $\mathscr F\colon \Pi\times X\to X^*$ is bounded and satisfies the following conditions:
    \begin{enumerate}
    	\item[{\rm(i)}] for each $e\in \Pi$, the function $X\ni x\mapsto\mathscr F(e,x)\in X^*$ is 
      monotone and hemicontinuous such that there exist two constants $c_\mathscr F>0$ and $d_\mathscr F\ge 0$ satisfying 
    \begin{eqnarray*}
    	\langle\mathscr F(e,x),x\rangle_X\ge c_\mathscr F\|x\|_X^p-d_\mathscr F\mbox{ for all $x\in X$ and all $e\in\Pi$},
    \end{eqnarray*}
    for some $1<p<+\infty$;
    \item[{\rm(ii)}] for each $y\in X$ fixed, $\Pi\ni e\mapsto F(e,y)\in X^*$ is continuous in the following sense that if $\{e_n\}\subset \Pi$ and $e\in \Pi$ are such that $e_n\to e$ weakly$^*$ in $\Pi_1$, then  $\mathscr F(e_n,y)\to \mathscr F(e,y)$ in $X^*$ as $n\to\infty$. 
        \end{enumerate}
        \item[\textnormal{H($\mathscr G$)':}]  $\mathscr G\colon\Theta\times Y\to 2^{Y^*}$ is a multivalued mapping such that for each $l\in \Theta$ the multivalued mapping $\mathscr G(l,\cdot)\colon Y\to 2^{Y*}$ satisfies hypothesis H($\mathscr G$) and 
        \begin{enumerate}
        	\item[]  $\Theta\times Y\ni (l,y)\mapsto \mathscr G(l,y)\subset Y^*$ is closed in the following sense that if $\{l_n\}\subset \Theta$, $\{y_n\}\subset Y$ and $(l,y)\in \Theta\times Y$ are such that $l_n\to l$ weakly$^*$ in $\Theta_1$ and $y_n\to y$ in $Y$ and $\xi_n\in \mathscr G(l_n,y_n)$ is such that $\xi_n\wto \xi$ in $Y^*$ for some $\xi\in Y^*$, then it holds $\xi\in \mathscr G(l,y)$. 
        \end{enumerate} 
    \end{enumerate}
    
    The following theorem delivers the nonemptiness and compactness of solution set to Problem~\ref{problems6}.
    
    \begin{theorem}\label{theorems6}
    	Under the assumptions of  H($\mathscr K$), H($\mathscr L$), H($\mathscr F$)', H($\mathscr G$)', H($\gamma$), H($\Psi$), H($\Theta$), H($\mathscr M$),  H($\Sigma$), H($\Pi$) and H($h$), if the inequality (\ref{smallnesscondition}) holds, 
    		then Problem~\ref{problems6} admits an optimal pairs $(e^*,l^*,\mathscr E^*)\in \Pi\times \Theta\times\Sigma$, and the solution set of of Problem~\ref{problems6} is weakly$^*$ compact in $\Pi\times\Theta\times \Sigma$. 
    \end{theorem}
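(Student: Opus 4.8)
The plan is to apply the direct method of the calculus of variations. First I would check that the problem is well posed: by H($h$) we have $h(x,e,l,\mathscr E)\ge r(e,l,\mathscr E)$ with $r$ bounded from below, so $m:=\inf_{(e,l,\mathscr E)\in\Pi\times\Theta\times\Sigma}\mathcal C(e,l,\mathscr E)>-\infty$. Moreover, for each fixed admissible triple the constants in H($\mathscr F$)' and H($\mathscr G$)' are uniform, so Theorem~\ref{theorems1} applies and guarantees that $S(e,l,\mathscr E)$ is nonempty and weakly compact in $\mathcal W$; since H($h$) makes $x\mapsto h(x,e,l,\mathscr E)$ weakly lower semicontinuous, the infimum defining $\mathcal C(e,l,\mathscr E)$ is attained, and I may pick $x(e,l,\mathscr E)\in S(e,l,\mathscr E)$ realizing it.

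Next I would take a minimizing sequence $\{(e_n,l_n,\mathscr E_n)\}$ with $\mathcal C(e_n,l_n,\mathscr E_n)\to m$ and corresponding minimizers $x_n\in S(e_n,l_n,\mathscr E_n)$. The coercivity of $r$ in H($h$) forces $\{e_n\}$, $\{l_n\}$ and $\{\mathscr E_n\}$ to be bounded in $\Pi_1$, $\Theta_1$ and $Z$; otherwise $r(e_n,l_n,\mathscr E_n)\to+\infty$ would contradict $\mathcal C(e_n,l_n,\mathscr E_n)\to m<+\infty$. Using H($\Pi$), H($\Theta$), H($\Sigma$), the reflexivity of $Z$ and the compact embedding $Z\subset X^*$, I extract a subsequence (not relabeled) with $e_n\wsto e^*$ in $\Pi_1$, $l_n\wsto l^*$ in $\Theta_1$, $\mathscr E_n\wto\mathscr E^*$ in $Z$ and $\mathscr E_n\to\mathscr E^*$ in $X^*$; the weak$^*$/weak closedness of the admissible sets gives $e^*\in\Pi$, $l^*\in\Theta$, $\mathscr E^*\in\Sigma$. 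Because the a priori bounds of Lemmas~\ref{lemmas2}--\ref{lemmas4} depend only on the uniform constants of H($\mathscr F$)' and H($\mathscr G$)', on $\sup_n\|\mathscr E_n\|_{X^*}<+\infty$, and on the smallness condition~(\ref{smallnesscondition}), the sequence $\{x_n\}$ is bounded in $\mathcal W$, and I may assume $x_n\wto x^*$ in $\mathcal W$.

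The decisive step is to prove $x^*\in S(e^*,l^*,\mathscr E^*)$, i.e.\ that the solution map has weakly$^*$ closed graph as the parameters vary. Fixing $y\in\mathscr M(x^*)\cap D(\mathscr L)$, H($\mathscr M$) furnishes $y_n\in\mathscr M(x_n)\cap D(\mathscr L)$ with $y_n\to y$ in $X$; testing~(\ref{eqns4.1}) with $y_n$ and using the monotonicity of $\mathscr L$ and of $\mathscr F(e_n,\cdot)$ exactly as in Step~1 of the proof of Theorem~\ref{theorems1}, I pass to the upper limit. The ingredients absent from Theorem~\ref{theorems1} are the parameter limits: the strong convergence $\mathscr E_n\to\mathscr E^*$ in $X^*$ handles the control term, H($\mathscr F$)'(ii) yields $\mathscr F(e_n,y)\to\mathscr F(e^*,y)$ in $X^*$ for the fixed test point $y$, the compactness of $\gamma$ on $\mathcal W$ together with H($\mathscr G$)' passes to the limit in $\langle\xi_n,\gamma(y_n-x_n)\rangle_Y$ and produces a multiplier $\xi^*\in\mathscr G(l^*,\gamma x^*)$, while H($\Psi$)(iii) controls the bifunction. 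A Minty-type argument, as in Theorem~\ref{theorems1}, then recovers the original inequality~(\ref{eqns4.1}) at $(e^*,l^*,\mathscr E^*)$, so $x^*\in S(e^*,l^*,\mathscr E^*)$. I expect this to be the main obstacle, as it simultaneously requires the closedness of $\mathscr M$, the parameter continuity of $\mathscr F$ and $\mathscr G$, and the limit passage in $\Psi$.

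Finally, the weak lower semicontinuity of $h$ from H($h$) gives, along $(x_n,e_n,l_n,\mathscr E_n)\wto(x^*,e^*,l^*,\mathscr E^*)$,
\begin{eqnarray*}
\mathcal C(e^*,l^*,\mathscr E^*)&\le& h(x^*,e^*,l^*,\mathscr E^*)\le\liminf_{n\to\infty}h(x_n,e_n,l_n,\mathscr E_n)\\
&=&\liminf_{n\to\infty}\mathcal C(e_n,l_n,\mathscr E_n)=m,
\end{eqnarray*}
whence $(e^*,l^*,\mathscr E^*)$ is optimal. For the weak$^*$ compactness of the optimal set I would take an arbitrary sequence of optimal triples and repeat the extraction-and-closedness argument: the limit triple is admissible and, by the estimate above, again attains the value $m$, hence is optimal. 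Therefore the set of optimal controls is nonempty and weakly$^*$ compact in $\Pi\times\Theta\times\Sigma$.
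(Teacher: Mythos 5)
Your proposal is correct and follows essentially the same route as the paper: well-posedness of $\mathcal C$ via Theorem~\ref{theorems1} and weak lower semicontinuity of $h$, a minimizing sequence made bounded by the coercivity of $r$, a priori bounds on the states, the Minty-type closedness argument for the solution map using H($\mathscr F$)'(ii), H($\mathscr G$)', H($\Psi$)(iii), the compactness of $\gamma$ and the compact embedding $Z\subset X^*$, and finally lower semicontinuity of $h$ plus a repetition of the extraction argument for compactness of the optimal set. No substantive differences from the paper's proof.
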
 
    \begin{proof}
First, we illustrate that the cost functional $\mathcal C\colon \Pi\times\Theta\times \Sigma\to \mathbb R$ is well-defined. For any $(e,l,\mathscr E)\in \Pi\times\Theta\times \Sigma$ fixed, it could use Theorem~\ref{theorems1} to see that $S(e,l,\mathscr E)$ is nonempty and weakly compactly in $\mathcal W$.  Because $h$ is bounded from the below, there exists a minimizing sequence $\{x_n\}\subset S(e,l,\mathscr E)$ to problem
\begin{eqnarray*}
	\inf_{x\in S(e,l,\mathscr E)}h(x,e,l,\mathscr E),
\end{eqnarray*}
namely,
\begin{eqnarray*}
		\inf_{x\in S(e,l,\mathscr E)}h(x,e,l,\mathscr E)=\lim_{n\to\infty}h(x_n,e,l,\mathscr E).
\end{eqnarray*}
Recall that $\{x_n\}\subset S(e,l,\mathscr E)$ and $S(e,l,\mathscr E)$ is weakly compact in $\mathcal W$, so, it could say that there is $x\in S(e,l,\mathscr E)$ such that 
\begin{eqnarray*}
	x_n\to x\mbox{ in $\mathcal W$}. 
\end{eqnarray*}
 Therefore, we can utilize the lower semicontinuity of $h(\cdot,e,l,\mathscr E)$ to get 
 \begin{align*}
 	\inf_{z\in S(e,l,\mathscr E)}h(z,e,l,\mathcal E)&=\lim_{n\to\infty}h(x_n,e,l,\mathscr E)\\
 	&\ge h(x,e,l,\mathscr E)\\
&\ge 		\inf_{z\in S(e,l,\mathscr E)}h(z,e,l,\mathscr E).
 \end{align*}
 This means that $\mathcal C\colon \Pi\times\Theta\times \Sigma\to \mathbb R$ is well-defined and for each $(e,l,\mathscr E)$ there exists $x\in S(e,l,\mathscr E)$ satisfying 
 \begin{eqnarray}\label{eqns4.3}
h(x,e,l,\mathscr E)=	\inf_{z\in S(e,l,\mathscr E)}h(z,e,l,\mathscr E).
 \end{eqnarray}
 
 Keeping in mind that $h$ is bounded from below, so, $\mathcal C$ is bounded from below as well. This allows us to pick up a minimizing sequence $\{(e_n,l_n,\mathscr E_n)\}\subset \Pi\times\Theta\times \Sigma$ for optimization problem (\ref{eqns4.0}) such that 
 \begin{eqnarray}\label{eqns4.5}
 \lim_{n\to\infty}	\mathcal C(e_n,l_n,\mathscr E_n)=\inf_{(e,l,\mathscr E)\in \Pi\times\Theta\times \Sigma}	\mathcal C(e,l,\mathscr E):=\rho>-\infty. 
 \end{eqnarray}
 Using hypothesis H($h$), we can observe that 
 \begin{eqnarray*}
 		\rho\ge \limsup_{n\to\infty}r(e_n,l_n,\mathscr E_n).
 \end{eqnarray*}
It shows that $\{e_n\}$ is bounded in $\Pi_1$, $\{l_n\}$ is bounded in $\Theta_1$, and $\{\mathscr E_n\}$ is bounded in $Z$. So, we infer that there are $(e^*,l^*,\mathscr E^*)\in \Pi\times\Theta \times\Sigma$ (due to the weakly closedness of $\Pi$, $\Theta$ and $\Sigma$) such that 
\begin{eqnarray}\label{eqns4.6}
	\left\{\begin{array}{lll}
			e_n\to e^*\mbox{ weakly$^*$ in $\Pi_1$},\\
			l_n\to l^*\mbox{ weakly$^*$ in $\Theta_1$},\\
			 \mathscr E_n\to\mathscr E^* \mbox{ weakly in $Z$ and strongly in $X^*$ (see hypothesis H($\Sigma$))}. 
		\end{array}\right.
\end{eqnarray}
Let sequence $\{x_n\}$ be such that $x_n\in S(e_n,l_n,\mathscr E_n)$ and 
\begin{eqnarray}\label{eqns4.7}
h(x_n,e_n,l_n,\mathscr E_n)=	\inf_{z\in S(e_n,l_n,\mathscr E_n)}h(z,e_n,l_n,\mathscr E_n).
\end{eqnarray}
We claim that $\{x_n\}$ is bounded in $\mathcal W$. The estimates 
	\begin{align*}
	& \left(\|\mathscr E_n\|_{X^*}+\|\gamma\|(c_\mathscr G\|\gamma\|^{p-1} \|x_n\|_X^{p-1}+d_\mathscr G)\right)\|x_n\|_X\\
	\ge&c_\mathscr F\|x_n\|_X^p-d_\mathscr F-\Psi(x_n,0)+\Psi(x_n,x_n)\\
	\ge &c_\mathscr F\|x_n\|_X^p-d_\mathscr F-c_\Psi\|x_n\|_X^\beta-d_\Psi,
\end{align*} 
reveal that $\{x_n\}$ is bounded in $X$ owing to inequality (\ref{smallnesscondition}). By hypotheses H($\mathscr M$), it may suppose that $B_X(0,d_0)\subset \mathscr M(x_n)$ for all $n\in\mathbb N$. Then, for every $z\in B_X(0,d_0)$, it is true
\begin{align*}
	\langle\mathscr L(x_n),-z\rangle
	\le &\langle\mathscr F(e_n,x_n)-\mathscr E_n,z-x_n\rangle_X+\langle\xi_n,\gamma(z-x_n)\rangle_Y- b_\Psi(x_n,x_n,z)\|x_n-z\|_X^\eta
\end{align*} 	
with  $\xi_n\in\mathscr G(l_n,\gamma x_n)$. The inequality above and the boundedness of of $\mathscr F$, $\mathscr G$, $b_\Psi$ and $\{x_n\}$ imply that 
$\{\mathscr Lx_n\}$ is bounded in $X^*$. Passing to a subsequence if necessary, we may say that 
\begin{eqnarray*}
	x_n\wto x\mbox{ in $X$ and $\mathscr Lx_n\wto \mathscr Lx$ in $X^*$}
\end{eqnarray*}
with $x\in \mathscr M(x)\cap D(\mathscr L)$ (see hypothesis H($\mathscr M$)(i)).  For every $y\in \mathscr M(x)$, we can use assumption H($\mathscr M$)(ii) to find a sequence $\{y_n\}$ satisfying 
\begin{eqnarray*}
	y_n\in\mathscr M(x_n)\mbox{ and $y_n\to y$ in $X$}.
\end{eqnarray*}
For any $n\in\mathbb N$, we have 
\begin{align*}
	\langle\mathscr L(x_n)+\mathscr F(e_n,x_n)-\mathscr E_n,y_n-x_n\rangle_X+\langle\xi_n,\gamma(y_n-x_n)\rangle_Y\ge \Psi(x_n,x_n)-\Psi(x_n,y_n).
\end{align*}
Condition H($\mathscr G$) results in the boundedness of $\{\xi_n\}$ in $Y^*$. But, the closedness of $\mathscr G$ (see hypothesis H($\mathscr G$)') and compactness of $\gamma$ in $\mathcal W$ guarantee that $\xi_n\wto \xi$ in $Y^*$ for some $\xi\in \mathscr G(l,\gamma x)$. 
Moreover, a simple calculation implies 
\begin{align*}
	&\langle\mathscr L(y)+\mathscr F(e_n,y),x_n-y_n\rangle_X\\
	\le&\langle\mathscr L(x_n)-\mathscr L(y)+\mathscr F(e_n,x_n)-\mathscr F(e_n,y),y-x_n\rangle_X+\langle-\mathscr E_n,y_n-x_n\rangle_X\\
	&+\langle\mathscr L(x_n)-\mathscr L(y)+\mathscr F(e_n,x_n)-\mathscr F(e_n,y),y_n-y\rangle_X+\langle\xi_n,\gamma(y_n-x_n)\rangle_Y\\
	&+\Psi(x_n,y_n)-\Psi(x_n,x_n)\\
		\le&\langle\mathscr L(x_n)-\mathscr L(y)+\mathscr F(e_n,x_n)-\mathscr F(e_n,y),y_n-y\rangle_X+\langle-\mathscr E_n,y_n-x_n\rangle_X\\
	&+\langle\xi_n,\gamma(y_n-x_n)\rangle_Y+\Psi(x_n,y_n)-\Psi(x_n,x_n).
	\end{align*}
	Taking the upper limit as $n\to\infty$ for the above estimate, it yields 
	\begin{align*}
			&\langle\mathscr L(y)+\mathscr F(e^*,y),x-y\rangle_X\\
		=&\lim_{n\to\infty}\langle\mathscr L(y)+\mathscr F(e_n,y),x_n-y_n\rangle_X \\
	\le&\lim_{n\to\infty}\langle\mathscr L(x_n)-\mathscr L(y)+\mathscr F(e_n,x_n)-\mathscr F(e_n,y),y_n-y\rangle_X+\lim_{n\to\infty}\langle-\mathscr E_n,y_n-x_n\rangle_X\\
	&+\lim_{n\to\infty}\langle\xi_n,\gamma(y_n-x_n)\rangle_Y+\limsup_{n\to\infty}\left[\Psi(x_n,y_n)-\Psi(x_n,x_n)\right]\\
	\le &\langle-\mathscr E^*,y-x\rangle_X+\langle\xi,\gamma(y-x)\rangle_Y+\Psi(x,y)-\Psi(x,x),
\end{align*}
where we have used the compactness of $Z$ to $X^*$. 
Therefore, it follows from the arbitrariness of $y\in \mathscr M(x)$ and the fact $\xi\in\mathscr G(l,\gamma x)$ that $x\in S(e,l,\mathscr E)$. 

Taking into account of (\ref{eqns4.5})--(\ref{eqns4.7}), one derives 
\begin{align*}
	\inf_{(e,l,\mathscr E)\in \Pi\times\Theta\times \Sigma}	\mathcal C(e,l,\mathscr E)=& \lim_{n\to\infty}	\mathcal C(e_n,l_n,\mathscr E_n)\\
	=&\lim_{n\to\infty}h(x_n,e_n,l_n,\mathscr E_n)\\
	\ge &h(x,e^*,l^*,\mathscr E^*)\\
	\ge & \mathcal C(e^*,l^*,\mathscr E^*)\\
	\ge &	\inf_{(e,l,\mathscr E)\in \Pi\times\Theta\times \Sigma}	\mathcal C(e,l,\mathscr E).
\end{align*}
This indicates that $(e^*,l^*,\mathscr E^*)\in \Pi\times\Theta\times \Sigma$ is an optimal control pair of Problem~\ref{problems6}. 

Furthermore, we shall verify that the solution set of Problem~\ref{problems6} is weakly compact in $\Pi\times\Theta\times \Sigma$. Let $\{(e_n,l_n,\mathscr E_n)\}\subset \Pi\times\Theta\times \Sigma$ be a solution sequence of Problem~\ref{problems6}. But, (\ref{eqns4.5}) says that $\{(e_n,l_n,\mathscr E_n)\}\subset \Pi\times\Theta\times \Sigma$ is bounded in $\Pi_1\times \Theta_1\times Z$. So, (\ref{eqns4.6}) holds. Employing the same arguments as in the proof of the first part, it could show that $(e^*,l^*,\mathscr E^*)\in \Pi\times \Theta\times \Sigma$ is a solution of Problem~\ref{problems6}, that is, the solution set of Problem~\ref{problems6} is weakly compact in $\Pi\times \Theta\times \Sigma$. 
    	\end{proof}
    	\begin{remark}
    		When $\mathscr F(e,\cdot)$ satisfies ($S_+$)-property with respect to $\mathscr L$, then the weakly lower semicontinuity of $h$ could be relaxed to the following one:
    		 \begin{itemize}
    		 	\item[]  $h\colon X\times\Theta\times \Pi\times \Sigma\to \mathbb R$ is  lower semicontinuous in the following sense, if $x_n\to x$ in $X$, $(e_n,l_n,\mathscr E_n)\to (e,l,\mathscr E)$ weakly$^*$ in $\Pi_1\times \Theta_1\times Z$ then the inequality is available
    		 	\begin{eqnarray*}
    		 		\liminf_{n\to\infty}h(x_m,e_n,l_n,\mathscr E_n)\ge h(x,e,l,\mathscr E).
    		 	\end{eqnarray*}
    		 \end{itemize}
    		 
    		On the other hand, Problem~\ref{problems6} can be a powerful and useful model to study various optimization or control problems driven by  evolution multivalued quasi-variational inequalities. For example, 
    		\begin{itemize}
    			\item[$\bullet$] when the control variables $(e,l,\mathscr E)$ are considered in the partial differential equations descried in the domain and certain part of boundary, then Problem~\ref{problems6} is a simultaneous distributed-boundary optimal control
    			problems driven by evolution multivalued quasi-variational inequalities.
    			\item[$\bullet$] if $(e,l,\mathscr E)$ are the unknown parameters for evolution multivalued quasi-variational inequality (\ref{eqns4.1}), then  Problem~\ref{problems6}  can be seen as an optimal identification model for the inverse parameters problem of evolution multivalued quasi-variational inequality (\ref{eqns4.1}).
    		\end{itemize}
    	\end{remark}
 	
 \section{Conclusions}\label{Section5}
 
We have studied a new class of evolution multivalued quasi-variational inequalities involving a nonlinear bifunction which contain several evolution quasi-variational inequalities as particular cases.  Under quite mild hypotheses, we employed an existence result for variational inequalities with a proper convex functional and a coercive maximal monotone operator, multivalued analysis and Kluge's fixed point theorem of multivalued version to prove the existence of solutions and compactness of solution set of the evolution multivalued quasi-variational inequalities under consideration. On the other side, we established a novel framework to solve a nonlinear and nonsmooth optimal control problem governed by evolution multivalued quasi-variational inequality, Problem~\ref{problems1}. Such nonlinear and nonsmooth optimal control problem could be applied to study simultaneous distributed-boundary optimal control
problems driven by evolution multivalued quasi-variational inequalities, optimal parameters identification for evolution multivalued quasi-variational inequalities, and so forth. Finally, we have to mention that the theoretical results established in this paper could be applied to research various  parabolic differential inclusions with nonlinear partial differential operators, semipermeability problems with mixed boundary conditions, and non-stationary Non-Newton fluid problems with multivalued and nonmonotone friction law, and so on (more details, one could refer our second paper~\cite{Zeng-Radulescu}).

\section*{Declarations}
\subsection*{Acknowledgments}

This project has received funding from the  Natural Science Foundation of Guang\-xi Grant Nos. 2021GXNSFFA196004 and GKAD23026237, the NNSF of China Grant No. 12371312,  and the European Union's Horizon 2020 Research and Innovation Programme under the Marie Sklodow\-ska-Curie grant agreement No. 823731 CONMECH. The first author is also supported by the project cooperation between Guangxi Normal University and Yulin Normal University. The research of V.D.~R\u adulescu was supported by the grant ``Nonlinear Differential Systems in Applied Sciences" of
the Romanian Ministry of Research, Innovation and Digitization, within PNRR-III-C9-2022-I8/22.

\subsection*{Data availability statement}
Data sharing not applicable to this article as no data sets were generated or analysed during the current study.

\subsection*{Ethical Approval}
Not applicable.

\subsection*{Competing interests} There is no conflict of interests.

\subsection*{Authors' contributions} The authors contributed equally to this paper.

\end{document}